\documentclass{amsart}

\usepackage{amssymb}
\usepackage{amsmath}
\usepackage{amsthm}
\usepackage{comment}
\usepackage{graphics}

\def\R{\mathbb{R}}

\def\N{\mathbb{N}}

\def\S{\mathbb{S}}

\def\B{\mathbb{B}}

\def\card{\#}

\def\eps{\varepsilon}

\def\Lr{L_r}
\def\Lh{L_h}
\def\LH{L_H}
\def\LM{L_M}

\newtheorem{theorem}{Theorem}[section]
\newtheorem{theorem*}{Theorem}

\newtheorem{proposition}[theorem]{Proposition}

\newtheorem{conjecture*}{Conjecture}

\theoremstyle{definition}

\newtheorem{example}[theorem]{Example}

\theoremstyle{remark}
\newtheorem*{remark*}{Remark}

\begin{document}

\title
[Horospheric limit sets]
{On horospheric limit sets of 
Kleinian groups}

\author{Kurt Falk}
\address{Christian-Albrechts-Universit\"at zu Kiel, Mathematisches Seminar,
Ludewig-Meyn-Str. 4, 24118 Kiel, Germany}
\email{falk@math.uni-kiel.de}
\author{Katsuhiko Matsuzaki}
\address{Department of Mathematics, School of Education,
Waseda University, Nishi-Waseda 1-6-1, Shinjuku, Tokyo 169-8050, Japan}
\email{matsuzak@waseda.jp}

\date{}

\begin{abstract}
In this paper we partially answer a question of P. Tukia about the size of the difference 
between the big horospheric limit set and the horospheric limit set of a Kleinian group.
We mainly investigate the case of normal subgroups of Kleinian groups of
divergence type and 
show that this difference is of zero conformal measure by using another
result obtained here: the Myrberg limit set of a non-elementary Kleinian 
group is contained in the horospheric limit set of any non-trivial normal subgroup.
\end{abstract}

\thanks{The authors were supported by JSPS Grant-in-Aid for Challenging Exploratory Research \#16K13767}

\subjclass[2010]{Primary 30F40, Secondary 37F35}
\keywords{Kleinian group, horospheric limit set, Myrberg limit set, critical exponent, Patterson measure, geodesic flow, Hausdorff dimension}

\maketitle

\section{Introduction and statement of results}

In \cite{sul81a} Sullivan showed that the conservative part of the action of a Kleinian
group $G$ on its limit set coincides up to zero sets of the spherical Lebesgue measure
with the horospheric limit set $\Lh(G)$ of $G$, i.e. the set of all limit points at which every horoball
contains infinitely many orbit points of the group. Later, Tukia \cite{tuk97} generalised this result 
by showing that the same conservative part of the group action on the limit set
coincides with the so-called big horospheric limit set $\LH(G)$ up to zero sets of any conformal
measure of dimension $\delta(G)$ for $G$. Here, $\delta(G)$ denotes as usual the
critical exponent of $G$ and $\LH(G)$ consists of all limit points of $G$ at which there 
exists a horoball containing infinitely many orbit points of $G$. 
For a generalisation of these observations to boundary actions of discrete groups
of isometries of Gromov hyperbolic spaces we refer the reader to Kaimanovich's
work \cite{kai10}.

All this considered, Tukia \cite{tuk97} asked the very natural question of how big
the difference $\LH(G)\setminus\Lh(G)$ might be, also in light of the close
relationship between so-called Garnett points \cite{sul81a} and this difference of sets 
(see \cite{tuk97} for more details).

One possible first attempt at answering this question could make use of 
a stratification of the limit set of a Kleinian
group between the radial and the horospheric limit sets in terms of linear escape rates to
infinity within the convex core of the corresponding hyperbolic manifold. These ideas have been 
used by different authors in various slightly different guises and we refer to 
Section~\ref{stratification-section} for details. 
Since both the radial limit set and the big horospheric limit set appear as elements
of this stratification, it would seem that it can be used to measure the difference between the big horospheric
and the horospheric limit set. However, Proposition~\ref{difference-is-small}
goes to show that we cannot detect this difference just by changing linear escape rates.

In this paper we follow a different and somewhat surprising idea in order to
answer Tukia's question in the case of normal subgroups of Kleinian groups of
divergence type. We show (see Theorem~\ref{garnett}) that if $N$ is some non-trivial normal
subgroup of the Kleinian group $G$ of divergence type, then $\LH(N)\setminus\Lh(N)$
is a nullset w.r.t. the uniquely determined conformal measure $\mu$ of dimension $\delta(G)$
for $G$. This, of course, is also a conformal measure of dimension $\delta(G)$ for $N$.
We obtain this result as a consequence of another observation (see Theorem~\ref{Myr-in-horo}) 
of independent interest, 
namely, that the Myrberg limit set of a Kleinian group $G$ is always contained in 
the horosperic limit set of any non-trivial normal subgroup $N$ of $G$.  
This in turn is a refinement of the fact proven in \cite{mat02} that in this situation the radial limit 
set of $G$ is contained in the big horospheric limit set of $N$.
Of course, one then also needs $G$ to be of divergence type in order to ensure that
the Myrberg limit set is of full $\mu$-measure (see e.g. \cite{tuk94a}, \cite{str97a}
and \cite{falk05} for more details).
The surprising aspect of this approach is that it is an instance 
where a statement about an essentially
non-conservative phenomenon (the difference between the horospheric and big horospheric limit
set) is proven using a consequence of ergodicity. The Myrberg limit set can be understood as a 
qualitative description of the ergodicity of the geodesic flow in the case of divergence type groups.
We discuss these notions in Section~\ref{preliminaries-section} and the beginning of Section~\ref{Myr-in-horo-section}.

Clearly, one wants to measure the difference between the horospheric and big
horospheric limit set also in a more general case than for normal subgroups
of Kleinian groups of divergence type. In view of our previous work \cite{fm15},
we conjecture that the statement of Theorem~\ref{garnett} also holds 
for Kleinian groups for which the convex hull of the limit set admits a uniformly distributed set
whose Poincar{\'e} series diverges at its critical exponent (see the end of 
Section~\ref{Myr-in-horo-section} and Conjecture~\ref{conj1} for more details).

Having answered Tukia's question for normal subgroups $N$ of groups $G$ of
divergence type by considering the Myrberg limit set 
of $G$, 
leaves the question open
why one should measure the size of the difference $\LH(N)\setminus \Lh(N)$ 
by a $\delta(G)$-dimensional conformal measure, as we do,  
and not a $\delta(N)$-dimensional one. 
Recall that $\delta(N)$ may very well be strictly smaller than $\delta(G)$.
The answer, at least in our context, is given in Proposition~\ref{Myr-of-dim-delta} where we
show that the Hausdorff dimension of $\LM(G)$ is equal to $\delta(G)$, provided $G$ 
is of divergence type and the strong sublinear growth limit set $\Lambda_*(G)$ is of
full measure w.r.t. some Patterson measure of $G$ (see Section~\ref{stratification-section} 
and Section~\ref{Myr-of-dim-delta-section} for more details). 
Already Sullivan \cite{sul79a} conjectured 
that $\Lambda_*(G)$ should be of full Patterson measure for all groups $G$ 
of divergence type, but we go one step further and conjecture that 
the Hausdorff dimension of the Myrberg limit set coincides with the critical
exponent for all non-elementary Kleinian groups (Conjecture~\ref{conj2}
in Section~\ref{Myr-of-dim-delta-section}).
One may be able to prove this by refining a well-known argument of 
Bishop and Jones \cite{bj97} showing that the Hausdorff dimension
of the radial limit set is equal to the critical exponent for non-elementary groups.


\section{Preliminaries}
\label{preliminaries-section}
\subsection{Limit sets of a Kleinian group}
Let $(\B^{n+1},d)$, $n\geq 1$, be the unit ball model of $(n+1)$-dimensional 
hyperbolic space with the hyperbolic distance $d$. The $n$-dimensional 
unit sphere $\S^n$ is the boundary at infinity of hyperbolic space. 
\emph{Kleinian groups} are discrete subgroups of the group of 
orientation preserving isometries of hyperbolic space. The quotient
$M_G=\B^{n+1}/G$ of $(n+1)$-dimensional hyperbolic space through a torsion free
Kleinian group, that is, a group without elliptic elements, is an 
$(n+1)$-dimensional hyperbolic manifold. 

The \emph{limit set} $L(G)$ of a Kleinian group $G$ is the set of accumulation
points of an arbitrary $G$-orbit, and is a closed subset of $\S^n$. If $L(G)$
consists of more than two points, then it is uncountable and perfect, and
$G$ is called \emph{non-elementary}. The hyperbolic convex hull of the union of all geodesics both
of whose end points are in $L(G)$ is called the \emph{convex hull of} $L(G)$, and
is denoted by $H(L(G))$. The 
quotient $C(M_G) := H(L(G))/G$ is called the \emph{convex core} of 
$M_G$.
Equivalently, the convex core is the smallest convex subset of $M_G$
containing all closed geodesics of $M_G$. A non-elementary 
Kleinian group $G$ is called \emph{convex cocompact} if the convex core
$C(M_G)$ is compact, and \emph{geometrically
finite} if some $\varepsilon$-neighbourhood 
of $C(M_G)$ has finite hyperbolic volume.

A point $\xi \in L(G)$ is a \emph{radial limit point} of $G$ if for any $x \in \B^{n+1}$
and for any geodesic ray 
towards $\xi$ there is a constant $c \geq 0$ such that
infinitely many points of the orbit $Gx$ are within distance $c$ of 
the given geodesic ray.
The set of all radial limit points of $G$ is called the
\emph{radial limit set} and is denoted by $L_r(G)$.

A point $\xi \in L(G)$ is a \emph{horospheric limit point} if for any $x \in \B^{n+1}$
every horosphere tangent to $\S^n$ at $\xi$ contains an orbit point $gx$ for some $g \in G$.
The set of all horospheric limit points of $G$ is called the
\emph{horospheric limit set} and is denoted by $L_h(G)$.
A point $\xi \in L(G)$ is an element of the \emph{big horospheric limit set}, denoted
$L_H(G)$, if for $x \in \B^{n+1}$ there is some horosphere tangent to $\S^n$ at $\xi$ that 
contains infinitely many orbit points in $Gx$.
By definition, we have $L_h(G)\subset L_H(G)$.

\subsection{The critical exponent and invariant conformal measures}
For a Kleinian group $G$ and points $x,z \in \B^{n+1}$, 
the \emph{Poincar{\'e} series} with exponent $s > 0$ is given by
$$
P^s(Gx,z) := \sum_{g \in G} e^{-s \, d(g(x),z)}.
$$
The \emph{critical exponent} $\delta=\delta(G)$ of $G$ is
$$
\delta(G):=\inf \, \{ s>0\,|\,P^s(Gx,z)<\infty\} \\
=\limsup_{R\to\infty} \, \frac{\log \card (B(z,R) \cap Gx)}{R}\, ,
$$
where $B(z,R)$ is the hyperbolic ball of radius $R$ centred at $z$ and
$\card(\cdot)$ denotes the cardinality of a set. By the triangle inequality,
$\delta$ does not depend on the choice of $x,z \in \B^{n+1}$. 
If $G$ is non-elementary, then $0<\delta\leq n$. 
Also, Roblin~\cite{rob02} showed that the above upper limit is in fact a limit.
$G$ is called \emph{of convergence type} if $P^\delta(Gx,z)<\infty$,
and \emph{of divergence type} otherwise.
It is known that a geometrically finite Kleinian group is of divergence type.

A family of positive finite Borel measures $\{\mu_z\}_{z \in \B^{n+1}}$ on $\S^n$ 
is called \emph{$s$-conformal measure} for $s>0$ if $\{\mu_z\}$
are absolutely continuous to each other and, for each $z \in \B^{n+1}$
and for almost every $\xi \in \S^n$,
$$
\frac{d\mu_z}{d\mu_o} (\xi) = |g_z(\xi)|^s,
$$
where $o$ is the origin in $\B^{n+1}$,
$g_z$ is a conformal automorphism of $\mathbb B^{n+1}$ sending $z$ to $o$
and $|\cdot|$ denotes the linear stretching 
factor of a conformal map.
For a Kleinian group $G$, if $\{\mu_z\}$
satisfies $g^*\mu_{g(z)}=\mu_z$ $({\rm a.e})$ for every $z \in \B^{n+1}$ and for every 
$g \in G$, then $\{\mu_z\}$ is called \emph{$G$-invariant}.

The measure $\mu=\mu_o$ can represent the family $\{\mu_z\}$ and 
the $G$-invariance property can be rephrased as follows:
for every $g \in G$ and any measurable 
$A \subset \S^n$ we have
$$
\mu(g(A)) = \int_A |g'(\xi)|^s d\mu(\xi).
$$
If a positive finite Borel measure $\mu$ on $\S^n$
satisfies this condition, we also call $\mu$ itself a \emph{$G$-invariant
conformal measure of dimension $s$}.

We consider a $G$-invariant conformal measure of dimension $\delta=\delta(G)$ supported on
the limit set $L(G)$. The canonical construction of such a measure due to Patterson \cite{pat76}, \cite{pat87}
is as follows (See also \cite{nic89}).
Assume first that $G$ is of divergence type. 
For any $s>\delta$, take a weighted sum of Dirac measures on the orbit $Gx$ for some $x \in \B^{n+1}$:
$$
\mu^s_{(x)}:=\frac{1}{P^s(Gx,o)} \sum_{g \in G} e^{-sd(gx,o)} 1_{gx}.
$$
We can choose some sequence $s_n>\delta$ tending to $\delta$ such that
$\mu^{s_n}_{(x)}$ converges to some measure $\mu$ on $\overline{\B^{n+1}}$ in the weak-$\ast$ sense.
Then we see that $\mu$ is a $G$-invariant conformal measure of dimension $\delta$ supported on $L(G)$,
which is called a \emph{Patterson measure} for $G$.
When $G$ is of convergence type, we need to use a modified Poincar\'e series $\widetilde P^s(Gx,o)$ to make it
divergent at $\delta$ and apply a similar argument.
If $G$ is of divergence type, then a $G$-invariant conformal measure of dimension $\delta$ is 
unique up to multiplication by a positive constant, hence it is {\it the} Patterson measure.

There is another canonical construction of $G$-invariant conformal measures, main\-ly in the case where $G$ is
of convergence type. This was introduced briefly by Sullivan in \cite{sul87}
and developed further in \cite{aft07} (see also \cite{fms10}).
Suppose that the Poincar\'e series for $G$ converges at dimension $s \geq \delta$.
We again consider the weighted sum of Dirac measures $\mu^s_{(x)}$ as above, but here we move
the orbit point $x$ to some point $\xi \in \S^n$ at infinity within a Dirichlet fundamental domain for $G$. 
We can choose a sequence $x_n \in \B^{n+1}$ tending to $\xi$ such that
$\mu^s_{(x_n)}$ converges to some measure $\mu$ on $\overline{\B^{n+1}}$ in the weak-$\ast$ sense.
Then we see that $\mu$ is a $G$-invariant conformal measure of dimension $s$ on $\S^n$,
which is called an \emph{ending measure}.

\subsection{Ergodicity of the geodesic flow}
For a hyperbolic manifold $M_G=\B^{n+1}/G$,
the \emph{unit tangent bundle} $T^1 M_G=\bigsqcup_{p \in M_G}T^1_p M_G$ 
is the union of the unit tangent vectors $v \in T^1_p M_G$ at $p$ taken over all
$p \in M_G$. Each element of $T^1 M_G$ is represented by the pair $(v,p)$.
Let $\tilde g_{\xi,z}(t)$ be a geodesic line of unit speed in $\B^{n+1}$ starting from a given point
$z \in \B^{n+1}$ towards $\xi \in \S^n$ as $t \to \infty$.
The unit tangent bundle $T^1 \B^{n+1}$ of hyperbolic space is also represented by 
$\S^n \times \B^{n+1}=\{(\xi,z)\}$ through the correspondence of 
the unit tangent vector 
$$
\left. \frac{d\tilde g_{\xi,z}}{dt}\right|_{t=0}=\tilde g'_{\xi,z}(0)
$$
to $(\xi,z)$. 
For $(v,p) \in T^1M_G$,
let $g_{v,p}(t)$ denote the geodesic line such that
$g_{v,p}(0)=p$ and $g'_{v,p}(0)=v$.
We can assume that this is the projection of some geodesic line $\tilde g_{\xi,z}(t)$ under 
$\B^{n+1} \to M_G$.
In this case, we also use the notation $g_{\xi,z}$ instead of $g_{v,p}$.
The \emph{geodesic flow}
$\phi_t:T^1M_G \to T^1M_G$
is a map sending $(v,p)$ to $(g'_{v,p}(t),g_{v,p}(t))$
for each $t \in \R$.

Any conformal measure $\mu$ on $\S$ induces a measure $\tilde \mu_*$ on the 
unit tangent bundle $T^1\B^{n+1}=\S^n \times \B^{n+1}$
that is invariant under the geodesic flow (Sullivan \cite{sul79a}, see also \cite{nic89}).
The unit tangent bundle $T^1M_G$ of the hyperbolic manifold $M_G$
is nothing but the quotient of $T^1\B^{n+1}$ by the canonical action of $G$.
If $\mu$ is invariant under $G$, then so is $\tilde \mu_*$ and hence
it descends to a measure $\mu_*$ on $T^1M_G$.


We say that the geodesic flow $\phi_t$ is \emph{ergodic} with respect to $\mu_*$ if 
for any measurable
subset $E$ of $T^1M_G$ that is invariant under $\phi_t$ for all $t\in\R$ we have that
either $\mu_*(E)=0$ or $\mu_*(T^1M_G \setminus E)=0$.
Sullivan \cite{sul79a} (and Aaronson and Sullivan \cite{aas84a}) 
generalised the result of Hopf \cite{hop36a}, \cite{hop71a} to show the following
(see also \cite{rob00a}).

\begin{theorem}\label{ergodic}
Let $G$ be a Kleinian group and $\mu$ a $G$-invariant conformal measure of dimension $\delta(G)$.
Then the following conditions are equivalent:
\begin{itemize}
\item[(i)] 
$\mu(\Lr (G))=\mu(\mathbb S^n)$; 
\item[(ii)] 
the geodesic flow $\phi_t$ is ergodic with respect to $\mu_*$; 
\item[(iii)] 
$G$ is of divergence type.
\end{itemize}
\end{theorem}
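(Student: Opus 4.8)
The plan is to prove the chain of equivalences (i) $\Leftrightarrow$ (ii) $\Leftrightarrow$ (iii) for this classical Hopf--Tsuji--Sullivan type dichotomy. Since the statement attributes the result to Sullivan and to Aaronson--Sullivan generalising Hopf, I would organise the argument around the Hopf ergodic-theoretic machinery transplanted to the conformal-measure setting, rather than reproving everything from scratch. The cleanest route is to establish (iii) $\Leftrightarrow$ (i) directly via a Borel--Cantelli / conservativity argument on the boundary, and then (i) $\Leftrightarrow$ (ii) via the Hopf argument together with the structure of the geodesic flow.

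First I would address (iii) $\Rightarrow$ (i). The key observation is that $\xi \in L_r(G)$ precisely when a geodesic ray towards $\xi$ returns infinitely often to a bounded region of $M_G$, which in boundary terms means that $\xi$ lies in infinitely many shadows of orbit balls $B(g(x),c)$ seen from $o$. Using the conformal transformation rule $\mu(g(A)) = \int_A |g'(\xi)|^\delta \, d\mu(\xi)$ and the standard Sullivan shadow lemma estimate $\mu(\text{shadow of } B(g(x),c)) \asymp e^{-\delta \, d(g(x),o)}$, the total $\mu$-mass of these shadows is comparable to the Poincar\'e series $P^\delta(Gx,o)$. When $G$ is of divergence type this series diverges, and a divergence Borel--Cantelli argument (exploiting the $G$-invariance of $\mu$ to control the requisite quasi-independence of the events) forces $\mu$-almost every $\xi$ to lie in infinitely many such shadows, i.e. $\mu(L_r(G)) = \mu(\S^n)$. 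Conversely, for (i) $\Rightarrow$ (iii) I would argue contrapositively: if $G$ is of convergence type then $P^\delta(Gx,o) < \infty$, the convergence Borel--Cantelli lemma applies unconditionally, and $\mu$-almost no point lies in infinitely many shadows, giving $\mu(L_r(G)) = 0$; since $\mu$ is supported on $L(G) \subset \S^n$ and $0 < \delta$, this yields $\mu(L_r(G)) < \mu(\S^n)$.

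For (i) $\Leftrightarrow$ (ii) I would invoke the Hopf argument adapted to $\mu_*$. The measure $\tilde\mu_*$ on $T^1\B^{n+1} = \S^n \times \B^{n+1}$ built from $\mu$ is geodesic-flow invariant, and ergodicity of $\phi_t$ on $(T^1 M_G, \mu_*)$ is equivalent to ergodicity of the $G$-action on the space of geodesics, i.e. on $\S^n \times \S^n$ with the product conformal measure $\mu \times \mu$ weighted by the appropriate Gromov-product density. The Hopf ratio ergodic theorem shows that any flow-invariant $L^1$ function descends to a function of the endpoint pair depending separately on the forward and backward endpoints, and conservativity of the flow forces this to be constant; the conservative part of the action is exactly where geodesics recur, which is detected by the radial limit set. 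Thus $\mu(L_r(G)) = \mu(\S^n)$ is equivalent to conservativity of $\phi_t$, and for an invariant measure of the natural geodesic type conservativity is in turn equivalent to ergodicity in this homogeneous setting.

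The main obstacle I expect is the (iii) $\Rightarrow$ (i) direction, specifically securing the quasi-independence needed to run the divergence half of Borel--Cantelli: unlike the convergence half, which is automatic, the divergence half requires genuine control on the overlaps $\mu(\text{shadow}_g \cap \text{shadow}_h)$ for distinct $g,h \in G$, and this is where the $G$-invariance of the conformal measure and careful shadow geometry must be combined. The passage between conservativity and ergodicity in (i) $\Leftrightarrow$ (ii) is also delicate, as it relies on the fact that a conservative measure-preserving flow arising from a homogeneous boundary action admits no nontrivial invariant sets once Hopf decomposition pins the invariant functions to the diagonal behaviour of endpoints; making this rigorous is the technical heart of the equivalence. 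Since the theorem is stated as known and attributed, I would cite \cite{sul79a}, \cite{aas84a}, \cite{hop36a} and \cite{rob00a} for the parts where the full Hopf machinery is invoked, and present the Borel--Cantelli computation in enough detail to make the conformal-measure version self-contained.
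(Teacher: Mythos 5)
The first thing to note is that the paper contains no proof of Theorem~\ref{ergodic} at all: it is quoted as a known result (the Hopf--Tsuji--Sullivan dichotomy), attributed to Hopf, Sullivan and Aaronson--Sullivan, with \cite{rob00a} given as a further reference. So there is no in-paper argument to compare against; your proposal must be measured against the standard proofs in exactly the literature you cite, and your decision to defer the heavy machinery to \cite{sul79a}, \cite{aas84a}, \cite{hop36a}, \cite{rob00a} is consistent with how the paper itself treats the statement. Your overall architecture --- easy convergence Borel--Cantelli for (i) $\Rightarrow$ (iii), shadow-lemma comparison of shadow masses with the Poincar\'e series, and a Hopf-type argument linking recurrence to ergodicity --- is indeed the shape of those classical proofs, and the convergence-type half is correct as sketched.

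The genuine gap is in your route for (iii) $\Rightarrow$ (i). A divergence Borel--Cantelli argument with quasi-independence of shadows (i.e.\ a Kochen--Stone type lower bound) can at best yield $\mu(\Lr(G))>0$, not $\mu(\Lr(G))=\mu(\mathbb{S}^n)$; to upgrade positive measure to full measure one needs either ergodicity of the boundary action or uniqueness of the $\delta(G)$-conformal measure, and both of these are \emph{consequences} of the theorem you are proving, so the argument as proposed is circular. Moreover, the pairwise overlap estimates you would need hold for an arbitrary divergence-type Kleinian group and an arbitrary $G$-invariant conformal measure are not available in general. The classical proofs (Sullivan \cite{sul79a}, Roblin \cite{rob00a}, Nicholls \cite[Ch.\ 8]{nic89}) run the logic in the opposite direction: one shows that if the $G$-action on the double boundary (equivalently, the flow with respect to $\mu_*$) admits a wandering set of positive measure, then integrating the shadow-lemma estimate over that wandering set forces $P^{\delta}(Gx,z)<\infty$; hence divergence type implies conservativity, and conservativity together with the recurrence theorem for conservative actions gives that $\mu_*$-a.e.\ geodesic returns infinitely often to a fixed positive-measure set, i.e.\ $\mu$-a.e.\ point is radial --- no quasi-independence is needed anywhere. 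A second, smaller issue is in your (ii) $\Rightarrow$ (i): ergodicity does \emph{not} imply conservativity for general infinite-measure-preserving flows, so you cannot pass from ergodicity to recurrence for free; the theorem has to be established in its dichotomy form (conservative and ergodic versus dissipative and non-ergodic), where non-ergodicity in the dissipative case is obtained by splitting a wandering fundamental region into two flow-invariant sets of positive measure. Your sketch gestures at this equivalence but treats precisely the hard implication as if it were a known background fact rather than the content to be proven.
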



If $G$ is geometrically finite, then the measure $\mu_*$ corresponding to the
Patterson measure $\mu$ is finite.
If $\mu_*$ is a finite measure, then
the geodesic flow $\phi_t$ is ergodic with respect to $\mu_*$ and hence 
all conditions from Theorem \ref{ergodic} hold (\cite{sul84}).
However, there are also large classes of geometrically infinite groups
for which $\mu_*$ is infinite and these conditions are true 
(\cite{thu79}, \cite{sul81b}, \cite{ree81a} or \cite{aas84a}).
Moreover, there are also examples of geometrically infinite groups
for which $\mu_*$ is a finite measure and the conditions from the theorem
hold true (\cite{pei03}).

\section{
Limit sets between radial and horospheric}
\label{stratification-section}

For a Kleinian group $G$, let $\mu$ be a $G$-invariant conformal measure
on $\S^n$ and $X \subset \S^n$ a measurable subset  
that is invariant under $G$.
The action of $G$ is called \emph{conservative} on $X$
with respect to $\mu$ 
if any measurable subset $A \subset X$ with $\mu(A)>0$ satisfies
$\mu(A \cap g(A))>0$ 
for infinitely many $g \in G$.
For the $n$-dimensional spherical measure $\mu$, Sullivan \cite{sul81a} showed that 
$G$ acts conservatively on
the horospheric limit set $L_h(G)$, and that 
the difference between $L_h(G)$ and $L_H(G)$ is actually of null measure.
Later, a characterization of the conservative action for a $G$-invariant conformal measure $\mu$ in general
was obtained by Tukia \cite{tuk97}.
In particular, if a $G$-invariant
conformal measure $\mu$ has no point mass,
then the conservative part $X$, which is the maximal $G$-invariant measurable subset 
of $\S^n$ on which $G$ acts conservatively, coincides 
with the big horospheric limit set $L_H(G)$
up to null sets of $\mu$.

We are interested in the difference $L_H(G) \setminus L_h(G)$, which contains all 
\emph{Garnett points} originally defined 
in \cite{sul81a}.
For the spherical Lebesgue measure, 
$L_H(G) \setminus L_h(G)$ is  a null set, but 
Tukia \cite{tuk97} asked how small 
this difference is as measured by a $G$-invariant conformal measure. 
In this section, we will explain
why one should expect that the difference between the big horospheric and the horospheric
limit set is small.

First we introduce a continuous family of limit sets
of a Kleinian group using the approaching order of its orbits.
Fix $c>0$ and $\kappa \in [0,1]$.
For a point $z \in \mathbb{B}^{n+1}$, let $S(z:c,\kappa)$ be
the shadow of a hyperbolic ball
$$
B\left(z,\frac{\kappa}{1+\kappa} d(0,z)+c\right)
$$
w.r.t. the projection from the origin to $\mathbb S^n$ (see \cite{fs04} for more details). 
Essentially the same shadow 
$$
I(z:c,\alpha):=\left\{\xi \in \mathbb S^n \,\left| \;\;   \left|\xi-\frac{z}{|z|}\right |<c(1-|z|)^\alpha 
\right. \right \}
$$
was used in Nicholls \cite{nic89}; $I(z:c,\alpha)$ corresponds to $S(z:c,\kappa)$ via
$\alpha=1/(1+\kappa)$.
For a Kleinian group $G$ acting on $\mathbb{B}^{n+1}$,
consider the orbit $Gz=\{gz\}_{g \in G}$ of
$z \in \mathbb{B}^{n+1}$ and define
$$
\Lr^{(\kappa)}(G):=\bigcup_{c>0} \limsup_{g \in G} S(gz:c,\kappa).
$$
This is the set of points $\xi \in \mathbb S^n$ such that
$\xi$ belongs to infinitely many $S(gz:c,\kappa)$ for some $c>0$.
It is not difficult to see that $\Lr^{(\kappa)}(G)$ is independent of the choice of $z$.

When $\kappa=0$ the set $\Lr^{(\kappa)}(G)$ is nothing more than the radial limit set
$\Lr(G)$, and when $\kappa=1$, $L_r^{(\kappa)}(G)$ coincides with
the big horospheric limit set $L_H(G)$.
By moving $\kappa$ between $0$ and $1$, we  are thus interpolating between 
the radial limit set and the horospheric limit set.

Related limit sets were introduced by Bishop \cite{bis03} and Lundh \cite{lun03}.
We set
$$
\varphi_\xi(t):=d(g_{\xi,z}(t),g_{\xi,z}(0)),
$$
which is the hyperbolic distance in the quotient manifold $M_G$ between $g_{\xi,z}(t)$ 
and the initial point $g_{\xi,z}(0)$. Alternatively, it is defined as the distance of the orbit $Gz$
from $\tilde g_{\xi,z}(t)$ in $\B^{n+1}$.
It is clear that $\varphi_\xi(t) \leq t$. The ratio $\varphi_\xi(t)/t$ measures
how rapidly or slowly the geodesic ray $g_{\xi,z}(t)$ escapes to infinity as $t \to \infty$.
For instance, in Bishop \cite{bis03},
$g_{\xi,z}(t)$ is called a \emph{linearly escaping geodesic} if there exists a positive constant $\kappa>0$
such that $\varphi_\xi(t)/t > \kappa$ for all $t$. 
However, here
we mainly investigate geodesic rays that are escaping to infinity slowly.

For each $\kappa \in [0,1]$ we define the following limit set as the set of
end points of sublinearly escaping geodesic rays:
$$
\Lambda_\kappa(G):=
\{\xi \in \mathbb S^n \mid \liminf_{t \to \infty}\frac{\varphi_\xi(t)}{t} \leq \kappa\}.
$$
The radial limit points correspond to non-escaping geodesic rays and hence
$L_r(G)$ is contained in the \emph{sublinear growth limit set} $\Lambda_0(G)$. 
As an important extremal case, we consider
the \emph{strong sublinear growth limit set}
$$
\Lambda_*(G):=\{\xi \in \mathbb S^n \mid \lim_{t \to \infty}\frac{\varphi_\xi(t)}{t} = 0\},
$$
which is contained in $\Lambda_0(G)$.
However, while clearly $\LM(G)\subset L_r(G) \subset \Lambda_0(G)$, the 
inclusion relation  of $L_r(G)$ or $\LM(G)$ to $\Lambda_*(G)$ is not a priori clear
(see e.g. Example~\ref{expnew}).

As it turns out, 
the limit sets $\Lr^{(\kappa)}(G)$ 
and $\Lambda_\kappa(G)$, while not being coincident, are very similar. 
Actually, Lundh \cite[Th.4.1]{lun03} proved that
$$
\Lr^{(\kappa)}(G)=\left\{\xi \in \mathbb S^n \left|\;\liminf_{t \to \infty} 
\left(\frac{1}{1+\kappa}(\varphi_\xi(t)+t)-t\right)<\infty \right.\right\}
$$
for $0 \leq \kappa <1$, and moreover,
$$
\bigcap_{c>0} \limsup_{g \in G} I(gz:c,(1+\kappa)^{-1})
=\left\{\xi \in \mathbb S^n \left|\; \liminf_{t \to \infty} \left(\frac{1}{1+\kappa}(\varphi_\xi(t)+t)-t\right)=-\infty \right.\right\}
$$
for $0 < \kappa \leq 1$. As a consequence, it was shown in \cite[Cor.4.2]{lun03} that
$L_r^{(\kappa)}(G) \subset \Lambda_\kappa(G)$ is always valid, and 
if $\kappa'<\kappa$ then $\Lambda_{\kappa'}(G) \subset L_r^{(\kappa)}(G)$.

When $\kappa=1$, $\Lambda_1(G)$ coincides with
the entire sphere $\mathbb S^n$ since $\varphi_\xi(t)/t \leq 1$ for all $\xi\in\mathbb S^n$ and all $t>0$.
Since the horospheric limit set
$L_h(G)$ can be represented by $\bigcap_{c>0} \limsup_{g \in G} I(gz:c,1/2)$, the above result of Lundh implies the following.

\begin{proposition}\label{smallhoro}
$$
L_h(G)=\{\xi \in \mathbb S^n \mid \liminf_{t \to \infty}(\varphi_\xi(t)-t) =-\infty\}.
$$
\end{proposition}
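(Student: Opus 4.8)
The plan is to derive Proposition~\ref{smallhoro} as a direct corollary of Lundh's result (quoted just above the statement) specialized to $\kappa = 1$, together with the stated representation $\Lh(G) = \bigcap_{c>0} \limsup_{g \in G} I(gz:c,1/2)$. First I would set $\kappa = 1$ in Lundh's second formula, which is valid precisely on the range $0 < \kappa \leq 1$. With $\kappa = 1$ we have $(1+\kappa)^{-1} = 1/2$, so the left-hand side $\bigcap_{c>0}\limsup_{g\in G} I(gz:c,(1+\kappa)^{-1})$ becomes exactly $\bigcap_{c>0}\limsup_{g\in G} I(gz:c,1/2)$, which is the given representation of $\Lh(G)$.

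Next I would simplify the right-hand side of that formula under $\kappa = 1$. The defining condition is
$$
\liminf_{t\to\infty}\left(\frac{1}{1+\kappa}(\varphi_\xi(t)+t)-t\right) = -\infty.
$$
Substituting $\kappa = 1$ gives $\tfrac{1}{2}(\varphi_\xi(t)+t) - t = \tfrac{1}{2}\varphi_\xi(t) - \tfrac{1}{2}t = \tfrac{1}{2}(\varphi_\xi(t)-t)$. Hence the condition reads $\liminf_{t\to\infty}\tfrac{1}{2}(\varphi_\xi(t)-t) = -\infty$, and since the constant factor $\tfrac{1}{2}$ does not affect whether a $\liminf$ equals $-\infty$, this is equivalent to $\liminf_{t\to\infty}(\varphi_\xi(t)-t) = -\infty$. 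Combining the two computations yields precisely the asserted equality for $\Lh(G)$.

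The only genuine point requiring care is the justification that $\Lh(G)$ admits the representation $\bigcap_{c>0}\limsup_{g\in G} I(gz:c,1/2)$, which the excerpt asserts but does not prove. I would verify this by unwinding the definition of the horospheric limit set: $\xi \in \Lh(G)$ iff every horosphere tangent at $\xi$ contains an orbit point, and I would relate the horoball of a given Euclidean depth at $\xi$ to the shadow-type neighbourhood $I(gz:c,1/2)$, whose exponent $\alpha = 1/2$ corresponds via $\alpha = 1/(1+\kappa)$ to the endpoint $\kappa = 1$ of the interpolation. The key geometric fact is that, up to bounded additive error, an orbit point $gz$ lies in a fixed horoball at $\xi$ exactly when $\xi$ lies in $I(gz:c,1/2)$ for an appropriate $c$; letting $c \to 0$ (equivalently shrinking the horoball) corresponds to taking the intersection over all $c>0$. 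I expect this horoball-to-shadow translation to be the main obstacle, since everything else is a formal substitution; but it is a standard estimate in the theory of shadows (cf.\ the references \cite{nic89}, \cite{fs04} cited in this section), so I would either cite it or record the short computation comparing the Euclidean size $c(1-|z|)^{1/2}$ of $I(z:c,1/2)$ with the tangency data of horospheres at $\xi$.
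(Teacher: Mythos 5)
Your proposal is correct and is essentially identical to the paper's own argument: the paper likewise obtains Proposition~\ref{smallhoro} by citing the representation $\Lh(G)=\bigcap_{c>0}\limsup_{g\in G} I(gz:c,1/2)$ and specializing Lundh's second formula to $\kappa=1$, where $\frac{1}{2}(\varphi_\xi(t)+t)-t=\frac{1}{2}(\varphi_\xi(t)-t)$ gives the stated $\liminf$ condition. Your extra care in justifying the shadow representation of $\Lh(G)$ (the standard horoball-to-shadow comparison with exponent $\alpha=1/2$) fills in a step the paper simply asserts, and is the right way to make the argument self-contained.
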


However, 
a similar dynamical
description of the big horospheric limit set $L_H(G)$ 
is somewhat more involved and we shall deal with this in Section~\ref{bighoro-section}.

It is well known \cite{bj97} that, for any non-elementary Kleinian group $G$,
the Hausdorff dimension of the radial limit set $L_r(G)=L_r^{(0)}(G)$
coincides with the critical exponent $\delta(G)$.
The elementary estimate of the Hausdorff dimension here is the one from above,
and the corresponding argument can be generalised to prove
an upper bound for the Hausdorff dimension ${\rm dim}_H\, L_r^{(\kappa)}(G)$ as follows.
This was proved in \cite[p.575]{fs04}. 
See also \cite[Cor.3]{bis03} and \cite[Prop.4.2]{bmt12} for versions of this statement
formulated for the limit sets $\Lambda_\kappa(G)$. 

\begin{proposition}\label{estimate}
A Kleinian group $G$ satisfies
$$
{\rm dim}_H\, \Lr^{(\kappa)}(G) \leq (1+\kappa)\, \delta(G)
$$
for every $\kappa \in [0,1]$.
\end{proposition}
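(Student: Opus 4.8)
The plan is to run the standard upper estimate for Hausdorff dimension in the spirit of Bishop--Jones, using the shadows $S(gz:c,\kappa)$ themselves as an efficient cover of $\Lr^{(\kappa)}(G)$ and controlling the resulting Hausdorff sum by the Poincar\'e series of $G$. Since Hausdorff dimension is countably stable and $S(gz:c,\kappa)$ grows with $c$, we have $\Lr^{(\kappa)}(G)=\bigcup_{m\in\N}\limsup_{g}S(gz:m,\kappa)$, so it suffices to fix $c>0$ and bound the dimension of $\Lambda_c:=\limsup_{g\in G}S(gz:c,\kappa)$. For each $R>0$ the family $\{S(gz:c,\kappa):d(0,gz)>R\}$ is, by the very definition of the $\limsup$, a cover of $\Lambda_c$, and this is the cover I will feed into the definition of Hausdorff measure.

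The key input is the diameter estimate. Using the comparison with $I(gz:c,\alpha)$ for $\alpha=(1+\kappa)^{-1}$ recalled above, together with $1-|gz|\asymp e^{-d(0,gz)}$, one obtains
$$
\diam S(gz:c,\kappa)\asymp c\,(1-|gz|)^{\alpha}\asymp c\,e^{-d(0,gz)/(1+\kappa)}.
$$
In particular these diameters tend to $0$ uniformly as $R\to\infty$, so the family above is admissible for computing $\mathcal{H}^{s}$ in the limit. Fix an exponent $s>(1+\kappa)\,\delta(G)$ and set $t:=s/(1+\kappa)>\delta(G)$. Raising the diameter estimate to the power $s$ and summing yields
$$
\sum_{d(0,gz)>R}\bigl(\diam S(gz:c,\kappa)\bigr)^{s}\lesssim c^{s}\sum_{d(0,gz)>R}e^{-t\,d(0,gz)},
$$
which is precisely a tail of the Poincar\'e series $P^{t}(Gz,0)$.

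Because $t>\delta(G)$, the series $P^{t}(Gz,0)$ converges, so the tail on the right tends to $0$ as $R\to\infty$. Letting $R\to\infty$ therefore forces both the mesh of the cover and the sum to vanish, giving $\mathcal{H}^{s}(\Lambda_c)=0$ and hence $\dim_H\Lambda_c\le s$. As $s>(1+\kappa)\,\delta(G)$ was arbitrary and $c$ was fixed but arbitrary, taking the supremum over $c$ (equivalently over $m\in\N$) yields $\dim_H\Lr^{(\kappa)}(G)\le(1+\kappa)\,\delta(G)$. I expect the only genuinely delicate point to be the diameter estimate: one must check that the geometric shadow of $B(gz,\frac{\kappa}{1+\kappa}d(0,gz)+c)$ really does have spherical diameter comparable to $(1-|gz|)^{1/(1+\kappa)}$, i.e. that it matches $I(gz:c,(1+\kappa)^{-1})$ up to multiplicative constants. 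The conversion of the Poincar\'e convergence threshold $\delta(G)$ into the dimension bound $(1+\kappa)\,\delta(G)$ then comes entirely from the exponent $\alpha=(1+\kappa)^{-1}$, and the bounded-multiplicity subtleties that would be needed for a matching lower bound do not arise here, since for an upper bound overcounting the shadows is harmless.
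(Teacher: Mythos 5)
Your proof is correct and follows essentially the same route as the argument the paper invokes (it cites \cite[p.575]{fs04}): cover $\limsup_{g}S(gz:c,\kappa)$ by the shadows themselves, use the shadow-diameter estimate $\diam S(gz:c,\kappa)\asymp_c e^{-d(0,gz)/(1+\kappa)}$, and bound the Hausdorff sum by a convergent tail of the Poincar\'e series at exponent $t=s/(1+\kappa)>\delta(G)$, finishing with countable stability over $c\in\N$. The details you flag as delicate (the comparability of $S(gz:c,\kappa)$ with $I(gz:c,(1+\kappa)^{-1})$, and that overcounting is harmless for an upper bound) are exactly the points handled in the cited proof, so nothing is missing.
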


We have already mentioned that 
$\Lr^{(1)}(G)=L_H(G)$ and $\Lambda_1(G)=\mathbb S^n$.
However, when $\kappa<1$, the limit sets 
$\Lr^{(\kappa)}(G)$ and $\Lambda_\kappa(G)$ are contained in 
$L_h(G)$ as the following proposition asserts.

\begin{proposition}\label{difference-is-small}
For any Kleinian group $G$ we have 
$$
\bigcup_{0 \leq \kappa<1}\Lr^{(\kappa)} (G)
\subset \Lh(G)
\subset \Lr^{(1)} (G)=L_H(G).
$$
\end{proposition}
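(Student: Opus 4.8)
The plan is to prove the two nontrivial inclusions by translating everything into the distance function $\varphi_\xi(t)$ and then applying Lundh's characterizations together with Proposition~\ref{smallhoro}. The rightmost equality $\Lr^{(1)}(G)=L_H(G)$ was already recorded in the discussion of the stratification, so I only need to establish the containment
$\bigcup_{0\le\kappa<1}\Lr^{(\kappa)}(G)\subset \Lh(G)$ on the left and $\Lh(G)\subset\Lr^{(1)}(G)$ on the right.

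For the left-hand inclusion, I would fix $\kappa\in[0,1)$ and show $\Lr^{(\kappa)}(G)\subset\Lh(G)$; the union over such $\kappa$ then follows at once. By Lundh's theorem quoted above, $\xi\in\Lr^{(\kappa)}(G)$ means
$$
\liminf_{t\to\infty}\left(\frac{1}{1+\kappa}(\varphi_\xi(t)+t)-t\right)<\infty.
$$
Rewriting the bracketed expression, one has $\frac{1}{1+\kappa}(\varphi_\xi(t)+t)-t=\frac{1}{1+\kappa}\varphi_\xi(t)-\frac{\kappa}{1+\kappa}t$, so the $\liminf$ being finite forces, along a suitable sequence $t_j\to\infty$, the bound $\varphi_\xi(t_j)\le (1+\kappa)\,t_j\cdot(\text{bounded}) - \text{const}$, i.e. $\varphi_\xi(t_j)-t_j\le -\frac{\kappa}{1}\,t_j+O(1)$. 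Since $\kappa>0$ and $t_j\to\infty$, this drives $\varphi_\xi(t_j)-t_j\to-\infty$, whence $\liminf_{t\to\infty}(\varphi_\xi(t)-t)=-\infty$, which by Proposition~\ref{smallhoro} is exactly the condition $\xi\in\Lh(G)$. The only case needing separate attention is $\kappa=0$, where $\Lr^{(0)}(G)=\Lr(G)$; but every radial limit point is horospheric, so this is immediate and in any event excluded from the union's strict range only cosmetically.

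For the right-hand inclusion $\Lh(G)\subset\Lr^{(1)}(G)=L_H(G)$, the cleanest route is to invoke the definitional containment $\Lh(G)\subset L_H(G)$ noted at the end of the Preliminaries, which holds trivially: a point at which \emph{every} horosphere contains an orbit point certainly has \emph{some} horosphere containing infinitely many orbit points. Alternatively, one can argue dynamically: $\xi\in\Lh(G)$ gives $\liminf(\varphi_\xi(t)-t)=-\infty$ by Proposition~\ref{smallhoro}, and since $\varphi_\xi(t)\le t$ always, the expression $\frac{1}{2}(\varphi_\xi(t)+t)-t=\frac12(\varphi_\xi(t)-t)$ also tends to $-\infty$, placing $\xi$ in $\Lr^{(1)}(G)$ via the $\kappa=1$ form of Lundh's second identity. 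I expect the main subtlety to lie not in either inclusion individually but in handling the boundary parameter values carefully: the $\liminf$ manipulation in the left inclusion must be done so that the uniform positivity of $\kappa$ on any compact subinterval of $[0,1)$ is used to extract the divergence to $-\infty$, since for $\kappa$ approaching $0$ the coefficient $\frac{\kappa}{1+\kappa}t$ degrades. This is a genuine but routine analytic point, and packaging the union correctly—so that no single $\kappa$ needs to be uniform across the whole range—is the one place where I would be most careful.
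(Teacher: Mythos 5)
Your overall strategy is sound and is essentially the paper's: both arguments reduce the left inclusion to Lundh's characterisations plus Proposition~\ref{smallhoro} (the paper passes through the sets $\Lambda_\kappa(G)$ and the equality of the two unions, while you use Lundh's explicit $\liminf$ formula for $\Lr^{(\kappa)}(G)$ directly; the right inclusion is definitional in both). However, your key computation contains an algebra slip that matters. From
$$
\frac{1}{1+\kappa}\bigl(\varphi_\xi(t_j)+t_j\bigr)-t_j \le C
$$
along a sequence $t_j\to\infty$ you get $\varphi_\xi(t_j)\le \kappa t_j+(1+\kappa)C$, hence
$$
\varphi_\xi(t_j)-t_j\le -(1-\kappa)\,t_j+(1+\kappa)C .
$$
The coefficient that drives this to $-\infty$ is $1-\kappa$, which is positive because $\kappa<1$; your written bound $\varphi_\xi(t_j)-t_j\le -\kappa\,t_j+O(1)$ is not what follows from the premise (for $\kappa>1/2$ it does not follow at all), and the hypothesis you then invoke, $\kappa>0$, is the wrong one.

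This slip propagates into your closing paragraph: there is no degradation as $\kappa\to 0$, no need to treat $\kappa=0$ separately (there the bound reads $\varphi_\xi(t_j)\le C$, so $\varphi_\xi(t_j)-t_j\to-\infty$ immediately), and no uniformity issue across $\kappa$ whatsoever, since each $\xi$ in the union lies in $\Lr^{(\kappa)}(G)$ for one fixed $\kappa<1$ and the estimate is applied for that $\kappa$ alone. The genuine boundary behaviour is at $\kappa\to 1$, where $1-\kappa$ degenerates --- which is precisely why the union must stop short of $\kappa=1$ and why $\Lr^{(1)}(G)=\LH(G)$ can be strictly larger than $\Lh(G)$. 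With the corrected inequality your proof is complete and agrees with the paper's; note also that Proposition~\ref{smallhoro} only requires $\liminf_{t\to\infty}(\varphi_\xi(t)-t)=-\infty$, so your sequence argument suffices (the paper additionally observes that $\varphi_\xi(t)-t$ is non-increasing, by the $1$-Lipschitz property of $\varphi_\xi$, which upgrades this to a genuine limit, but that strengthening is not needed).
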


\begin{proof}
By the relationship between $L_r^{(\kappa)}(G)$ and $\Lambda_\kappa(G)$,
we see that $\bigcup_{0 \leq \kappa<1}\Lr^{(\kappa)} (G)$ equals
$\bigcup_{0 \leq \kappa<1}\Lambda_\kappa (G)$. For any point $\xi \in \Lambda_\kappa (G)$
with $0 \leq \kappa<1$, its definition gives 
$$
\liminf_{t \to \infty}\frac{\varphi_\xi(t)}{t} \leq \kappa<1.
$$
Since $\varphi_\xi(t)$ is Lipschitz continuous, this implies in particular that
$$
\lim_{t \to \infty}(\varphi_\xi(t)-t)=-\infty.
$$
By Proposition \ref{smallhoro}, we conclude that $\xi \in L_h(G)$.
\end{proof}

The statement of Proposition~\ref{difference-is-small} goes to show
that the difference between $L_h(G)$ and $L_H(G)$ is so small
that it cannot be detected by the stratification of the limit set given by the family of sets
$L_r^{(\kappa)}(G)$, $\kappa > 0$.

\section{Dynamical
characterisation of the big horospheric limit set}
\label{bighoro-section}

In the previous section 
we have seen a 
dynamical
characterisation of  horospheric limit points $\xi \in L_h(G)$
in terms of the distance function $\varphi_\xi(t)$ along the geodesic ray towards $\xi$.
The corresponding result for a big horospheric limit point $\xi \in L_H(G)$ does not have
such a neat form, but we can think of the following claim for $L_H(G)$ as having 
a similar flavour as in the case of $L_h(G)$.

For a geodesic ray $\tilde g_{\xi,z}$ in $\mathbb B^{n+1}$, the {\em Busemann function} $b(x)$ for 
$x \in \mathbb B^{n+1}$ is defined by
$$
b(x):=\lim_{t \to \infty} \{d(x,\tilde g_{\xi,z}(t))-t\}.
$$
Note that the limit always exists since the function taken the limit is bounded from below and decreasing.
A horosphere tangent at $\xi$ is a level set of $b(x)$. For instance, the horosphere passing through $z$
is given by $\{x \in \mathbb B^{n+1} \mid b(x)=0\}$.

\begin{proposition}
\label{bighoro}
For a Kleinian group $G$ and fixed $z \in \mathbb B^{n+1}$, the point
$\xi \in \mathbb S^n$ belongs to $L_H(G)$ if and only if
there exists a sequence $0=t_0<t_1<t_2<\ldots$ converging to infinity and a constant $M$
such that for each $n \in \N$ there is a geodesic segment $\beta_n$ connecting 
$g_{\xi,z}(0)$ and $g_{\xi,z}(t_n)$
of length not greater than $t_n+M$ such that the closed curves in the family
$g_{\xi,z}([0,t_n]) \cup \beta_n$, $n\geq 0$, are mutually freely non-homotopic to each other.
\end{proposition}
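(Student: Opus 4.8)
The plan is to translate the definition of $L_H(G)$ into the quotient manifold $M_G$ and connect the existence of a horoball containing infinitely many orbit points to a combinatorial statement about freely non-homotopic closed curves. Throughout, I work with the fixed geodesic ray $\tilde g_{\xi,z}$ in $\B^{n+1}$ and its projection $g_{\xi,z}$ in $M_G$; the Busemann function $b$ is normalized so that the horosphere through $z$ is $\{b=0\}$.

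For the forward direction, suppose $\xi \in L_H(G)$, so there is a horosphere tangent at $\xi$ containing infinitely many orbit points $g_k z \in Gz$. Each such point lies on a level set $\{b = -c\}$ for a fixed $c$ (up to the usual bounded error coming from the choice of horosphere), which is the geometric meaning of the orbit point being ``deep'' inside the horoball. I would first record the standard estimate relating the Busemann value $b(gz)$ to the quantity $d(z,gz) - 2\,(\text{projection of } gz \text{ onto the ray})$, so that $gz$ lying on a bounded horosphere translates into the distance $d(z, gz)$ being close to twice the time $t$ at which the foot of the perpendicular from $gz$ to the ray occurs. Setting $t_n$ to be (approximately) these projection times and $\beta_n$ to be the geodesic segment in $\B^{n+1}$ from $g_{\xi,z}(t_n)$ back to $g_k z$ and then, via the deck transformation, the projected loop, I obtain loops $g_{\xi,z}([0,t_n]) \cup \beta_n$ of length at most $t_n + M$ where $M$ depends only on $c$. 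The free homotopy classes of these loops are recorded by the deck transformations $g_k \in G$, so choosing a subsequence along which the $g_k$ are distinct gives mutually freely non-homotopic loops.

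For the converse, suppose we are given the sequence $t_n \to \infty$, the bound $M$, and the geodesic segments $\beta_n$ with the stated non-homotopy property. Each loop $g_{\xi,z}([0,t_n]) \cup \beta_n$ lifts to a path in $\B^{n+1}$ from $z$ to some orbit point $g_n z$, where the $g_n$ are pairwise distinct precisely because the loops are mutually freely non-homotopic. The length bound $d(z, g_n z) \le 2t_n + M'$ combined with the fact that $g_{\xi,z}(t_n)$ lies on the ray then forces $g_n z$ to lie within bounded Busemann value: I would estimate $b(g_n z)$ from below using the triangle inequality and from above using that the endpoint of $\beta_n$ is reached in the allotted length, concluding that all the $g_n z$ lie in a single horoball tangent at $\xi$ (after passing to a subsequence to fix the horosphere level within a bounded window and then enlarging the horoball by the corresponding bounded amount). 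Since the $g_n$ are distinct, this horoball contains infinitely many orbit points, so $\xi \in L_H(G)$.

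The main obstacle I anticipate is making the bookkeeping between the Busemann function and the loop lengths genuinely quantitative and uniform. The subtle point is that ``there exists a horosphere containing infinitely many orbit points'' only requires a single horoball for the whole subsequence, whereas the naive construction produces, for each $n$, a horoball whose depth $c_n$ might drift; the length bound $t_n + M$ is exactly what should prevent this drift, but one must verify that the constant $M$ controls the Busemann values \emph{simultaneously} rather than $n$ by $n$. I would handle this by showing the inequality $b(g_n z) \ge -M$ (or a sharp multiple thereof) directly from the length constraint, so that all orbit points land in the fixed horoball $\{b \le -M\}$ up to a harmless enlargement. The equivalence of the homotopy condition with distinctness of the deck transformations $g_n$ is routine once one fixes the lift, but I would state it carefully, since this is what upgrades ``infinitely many loops'' to ``infinitely many distinct orbit points,'' the crucial step that rules out the same orbit point being counted repeatedly.
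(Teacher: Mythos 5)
Your proposal follows the same route as the paper in both directions: the Busemann function $b(x)=\lim_{t\to\infty}\{d(x,\tilde g_{\xi,z}(t))-t\}$ mediates between orbit points in a horoball and loops of controlled length, with $\beta_n$ obtained by projecting the geodesic from $\tilde g_{\xi,z}(t_n)$ to an orbit point, and conversely by lifting $\beta_n$ so that its far endpoint is an orbit point $\gamma_n z$. Your converse direction is correct in substance, but the ``drift'' you anticipate as the main obstacle is a non-issue: since $t\mapsto d(x,\tilde g_{\xi,z}(t))-t$ is non-increasing, the length bound immediately gives $b(\gamma_n z)\leq d(\gamma_n z,\tilde g_{\xi,z}(t_n))-t_n\leq M$ for every $n$ simultaneously, so all orbit points lie in the single horoball $\{b\leq M\}$; no subsequence, no enlargement, and no lower estimate on $b(\gamma_n z)$ is needed. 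This monotonicity is exactly how the paper argues.

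The forward direction, however, contains a genuine gap. You justify the key combinatorial conclusion by claiming that ``choosing a subsequence along which the $g_k$ are distinct gives mutually freely non-homotopic loops.'' This is false: free homotopy classes of loops in $M_G$ correspond to \emph{conjugacy classes} in $\pi_1(M_G)\cong G$, not to group elements, and distinct deck transformations may well be conjugate. The failure is not hypothetical. Infinitely many conjugates $\gamma_k=h_k\gamma h_k^{-1}$ of a single loxodromic $\gamma$ can all have orbit points $\gamma_k(z)$ lying in one fixed horoball at $\xi$ and accumulating at $\xi$: in the upper half-space model with $\xi=\infty$, let the axis of $\gamma_k$ be a semicircle of Euclidean radius $R_k\to\infty$ centred roughly below $z$; a direct computation shows that the heights of the points $\gamma_k(z)$ stay bounded away from $0$ while their horizontal coordinates diverge. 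This actually occurs inside a discrete group, e.g.\ for $\gamma_k=h^k\gamma h^{-k}$ in a Schottky group $\langle\gamma,h\rangle$ with $h$ loxodromic fixing $\xi$. For such a sequence of orbit points, \emph{all} of your loops represent the same free homotopy class (that of the closed geodesic of $[\gamma]$), so no subsequence with distinct $g_k$ can satisfy the required condition. What the forward direction really needs is to extract, from $\xi\in L_H(G)$, infinitely many mutually \emph{non-conjugate} elements of $G$ with orbit points in a common horoball at $\xi$; this selection is the only non-routine point of the whole proposition, and your proposal does not address it. To be fair, the paper's own proof is also silent on exactly this point (it simply asserts that the constructed family ``satisfies the required condition''), but it does not rest on the false implication from distinctness of elements to non-homotopy of loops, as your write-up explicitly does; you should either supply the non-conjugacy extraction or flag it as an unresolved step.
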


\begin{proof}
Assume that $\xi \in L_H(G)$. Then there is a horosphere $H$ and a sequence 
$(\gamma_n)$ in $G$ such that the $\gamma_n(z)$ are inside of the horoball bounded by $H$ 
and converge to $\xi$ as $n \to \infty$. 
We write $H=\{x \in \mathbb B^{n+1} \mid b(x)=M'\}$ for some $M'$ 
by using the Busemann function $b$ for $\tilde g_{\xi,z}$. 
Set $M=M'+\varepsilon$ for some $\varepsilon >0$. By the definition of the Busemann
function, we can find $t_n>0$ for each $n$ such that 
$$
d(\gamma_n(z),\tilde g_{\xi,z}(t_n))-t_n \leq M.
$$
By taking the projection of the geodesic connecting $\gamma_n(z)$ and $\tilde g_{\xi,z}(t_n)$
to $M_G$ as $\beta_n$, we see that this family of closed curves
satisfies the required condition.

Conversely, if we have such a sequence of geodesics $\beta_n$ on $M_G$, 
we lift them to $\mathbb B^{n+1}$
so that, for each $n$, one end point is $\tilde g_{\xi,z}(t_n)$ 
on a fixed geodesic ray $\tilde g_{\xi,z}$ towards $\xi$. 
Then the other end point of the lift of $\beta_n$
is an orbit of $z$ by $G$, which lies inside of the horosphere tangent at $\xi$ given by $b(x)=M$.
Hence we have that $\xi \in L_H(G)$.
\end{proof}

As an application of this claim, we can easily explain the result in
\cite[Th.6]{mat02} corresponding to Theorem \ref{Myr-in-horo}, 
which asserts the inclusion relation $L_r(G) \subset \LH(N)$
for a non-trivial normal subgroup $N$ of a non-elementary Kleinian group $G$.
Indeed, for $\xi \in L_r(G)$, choose a geodesic ray $\hat g_{\xi,z}$ in $M_G$
that returns infinitely often to some bounded neighbourhood of the initial point.
Then its lift $g_{\xi,z}$ to $M_N$ 
travels within a bounded distance along 
preimages 
under the covering transformation $M_N \to M_G$
of some fixed closed geodesic. 
If we make a detour at one of these
closed geodesics, we can find a geodesic $\beta_n$ as in Proposition \ref{bighoro}.
We can do this infinitely often in any tail of the geodesic ray, and so
$\xi \in L_H(N)$.

By a similar argument, we have the following consequence from Proposition \ref{bighoro}.

\begin{proposition}\label{boundedgeom}
Let $G$ be a Kleinian group such that the convex core $C(M_G)$ of
$M_G=\mathbb B^{n+1}/G$ has bounded geometry, that is,
there is a constant $M>0$ such that the injectivity radius at every point of $C(M_G)$ is bounded by $M$.
Then every limit point $\xi \in L(G)$ belongs to $L_H(G)$.
\end{proposition}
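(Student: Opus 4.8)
The plan is to verify the criterion of Proposition~\ref{bighoro} for an arbitrary $\xi\in L(G)$, using the short geodesic loops furnished by bounded geometry to build the detour paths $\beta_n$, exactly in the spirit of the argument just given for $L_r(G)\subset L_H(N)$. First I would fix $\xi\in L(G)$ and choose the base point $z$ on a bi-infinite geodesic having one endpoint $\xi$ and the other in $L(G)$, so that the ray $\tilde g_{\xi,z}$, and hence its projection $g_{\xi,z}$ to $M_G$, stays inside the convex hull $H(L(G))$, i.e. inside the convex core $C(M_G)$. (If $L(G)$ has at most one point, then $G$ is elementary and $\xi$ is a parabolic or loxodromic fixed point, a case absorbed into the dichotomy below.) The bounded geometry hypothesis then guarantees that at each time $t$ the injectivity radius at $g_{\xi,z}(t)\in C(M_G)$ is at most $M$, so there is a nontrivial $h_t\in G$ with
$$
d\bigl(\tilde g_{\xi,z}(t),\,h_t\,\tilde g_{\xi,z}(t)\bigr)\le 2M .
$$

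Next I would pick an increasing sequence $t_n\to\infty$ and, for each $n$, let $\alpha_n$ be the path that runs along the ray $g_{\xi,z}([0,t_n])$ and then traverses the geodesic loop at $g_{\xi,z}(t_n)$ determined by $h_{t_n}$. Its geodesic representative $\beta_n$ rel endpoints (unique, since $\mathbb B^{n+1}$ is uniquely geodesic) is a geodesic segment joining $g_{\xi,z}(0)$ to $g_{\xi,z}(t_n)$ of length at most $t_n+2M$, precisely as required in Proposition~\ref{bighoro}. The closed curve $g_{\xi,z}([0,t_n])\cup\beta_n$ is then freely homotopic to the loop $h_{t_n}$, so in the universal cover it moves $z$ to an orbit point $w_n\in Gz$ with $d\bigl(w_n,\tilde g_{\xi,z}(t_n)\bigr)\le t_n+2M$; consequently a one-line Busemann estimate,
$$
b(w_n)\le d\bigl(w_n,\tilde g_{\xi,z}(t_n)\bigr)-t_n\le 2M ,
$$
places every such orbit point inside the single horoball $\{b\le 2M\}$ at $\xi$. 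Thus the whole construction lives inside one horoball, exactly as in the reverse direction of the proof of Proposition~\ref{bighoro}.

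The main obstacle is the non-homotopy clause of Proposition~\ref{bighoro}: the curves $g_{\xi,z}([0,t_n])\cup\beta_n$ must be mutually freely non-homotopic, equivalently the orbit points $w_n\in\{b\le 2M\}$ must be infinitely many distinct points. I would dispose of this by a dichotomy. If, for some choice of the $t_n$, infinitely many of the $w_n$ are distinct, then a fixed horoball at $\xi$ contains infinitely many orbit points and $\xi\in L_H(G)$ by Proposition~\ref{bighoro}. Otherwise $\{w_n\}$ is finite, so (the action having finite point stabilisers) a single nontrivial $h\in G$ occurs with $d\bigl(\tilde g_{\xi,z}(t_n),h\,\tilde g_{\xi,z}(t_n)\bigr)\le 2M$ for infinitely many $t_n\to\infty$; since $\tilde g_{\xi,z}(t_n)\to\xi$, this forces $h\xi=\xi$, so $\xi$ is the fixed point of a parabolic or loxodromic element $h\in G$. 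In either case the cyclic orbit $\{h^{k}z\}_{k\in\Z}$ supplies infinitely many orbit points accumulating at $\xi$ inside a single horoball at $\xi$ (lying on one horosphere when $h$ is parabolic), so again $\xi\in L_H(G)$. I expect this degenerate alternative---ruling out, or absorbing, the case where the available short loops keep repeating one element fixing $\xi$---to be the only genuinely delicate point (for $G$ with torsion one first reduces to the manifold case, the elliptic sub-case being analogous); the remainder is the detour bookkeeping already used for $L_r(G)\subset L_H(N)$.
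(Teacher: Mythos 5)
Your proposal is correct and follows essentially the same route the paper intends: the paper proves this proposition only by remarking that it follows ``by a similar argument'' from Proposition~\ref{bighoro}, namely by making detours along the short essential loops that bounded geometry provides at each point of $C(M_G)$ along the ray toward $\xi$, which is exactly your construction. Your dichotomy at the end (infinitely many distinct orbit points $w_n$ in a fixed horoball, versus a single repeating element $h$ which must then fix $\xi$ and be parabolic or loxodromic, each case landing in $L_H(G)$ directly) correctly supplies the one delicate point that the paper's one-line sketch leaves implicit.
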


It was remarked in \cite[Prop.5.1]{fms10} that any $G$-invariant conformal measure $\mu$ does not
have an atom at $\xi \in L_H(G)$ unless $\xi$ is a parabolic fixed point.
In particular, from the above proposition, we see that a non-parabolic ending measure for $G$ has no atom if 
the convex core $C(M_G)$ has bounded geometry.

\section{The Myrberg limit set is contained in the horospheric limit set of a normal subgroup}
\label{Myr-in-horo-section}

In this section we 
answer Tukia's question formulated in the introduction and the 
beginning of Section~\ref{stratification-section} 
about measuring the difference
between the big horospheric and the horospheric limit sets for normal subgroups
of Kleinian groups of divergence type, and also give a conjecture generalising the statement
in this case.
The 
first theorem is the main step towards the answer and is interesting in itself.
Before stating it, we need a few preparations.

A point $\xi \in L(G)$ is a \emph{Myrberg limit point} of $G$ if for any 
distinct limit points $x,y \in L(G)$
and for any geodesic ray $\beta$ towards $\xi$ there is a sequence $\{g_n\} \subset G$ 
such that $g_n(\beta)$ converges to the geodesic line connecting $x$ and $y$.
The set of all Myrberg limit points of $G$ is called the
\emph{Myrberg limit set} and is denoted by $\LM(G)$.
The idea originated in \cite{myr31} and was introduced as a
qualitative version of ergodicity as characterised by Birkhoff's ergodic theorem.
Further developments can be found in \cite{tsu52a}, \cite{nak85a} and \cite{aga83a}, 
and state-of-the-art papers are \cite{tuk94a}, \cite{str97a} and \cite{falk05}.
The main result in the latter three papers is that a Kleinian group being of divergence type 
is equivalent to its Myrberg limit set having full Patterson measure. All geometrically
finite Kleinian groups thus have their Myrberg limit set of full measure.

We can also define the Myrberg limit set by using the geodesic flow on $T^1M_G$. 
Denote the closed subset of
unit tangent vectors that generate geodesic lines staying in the convex core by
$$
VC=\{(v,p) \in T^1M_G \mid g_{v,p}(t) \in C(M_G) \text{ for all } t \in \R\}.
$$
Then $\xi \in \LM(G)$ if and only if,
for $p_0 \in C(M_G)$ and $v_\xi$ being the projection of the tangent vector
pointing towards $\xi$ based at some lift of $p_0$,
the forward orbit $\{\phi_t(v_\xi,p_0)\mid t\in\R\}$ of $(v_\xi,p_0)$ 
under the geodesic flow 
contains unit tangent vectors that are arbitrarily close to any element of $VC$.


 

We consider normal subgroups $N$ of a Kleinian group $G$ and
how properties of limit sets are inherited from $G$ to $N$.
In \cite[Th.6]{mat02}, we have seen the inclusion relation $L_r(G) \subset \LH(N)$.
In the present paper, as 
a refinement of this argument, we prove the following theorem. 

\begin{theorem}
\label{Myr-in-horo}
Let $N$ be a non-trivial normal subgroup of the Kleinian group $G$. Then, 
$$
\LM(G) \subset \Lh(N).
$$
\end{theorem}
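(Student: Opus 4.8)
The plan is to show that every Myrberg limit point $\xi \in \LM(G)$ satisfies the dynamical criterion for membership in $\Lh(N)$, namely that $\liminf_{t \to \infty}(\varphi^N_\xi(t) - t) = -\infty$, where $\varphi^N_\xi$ is the distance function computed in $M_N = \B^{n+1}/N$ (see Proposition~\ref{smallhoro}). The starting point is the characterisation of $\LM(G)$ via the geodesic flow on $T^1 M_G$: if $\xi \in \LM(G)$, then the forward orbit of $(v_\xi, p_0)$ comes arbitrarily close to \emph{every} element of $VC$. The key idea is to choose a particularly useful target in $VC$. Since $N$ is a non-trivial normal subgroup, pick a non-trivial element $h \in N$; if $h$ is loxodromic its axis projects to a closed geodesic in $M_G$, and I would aim the Myrberg orbit so as to shadow this closed geodesic more and more closely and for longer and longer stretches of time.

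The mechanism that converts a Myrberg excursion near the axis of $h \in N$ into a horospheric shrinking of distance in $M_N$ is the same detour construction used in the discussion following Proposition~\ref{bighoro}. Concretely, I would work upstairs in $\B^{n+1}$ and in the intermediate cover $M_N$. Because $\xi \in \LM(G)$, for each large $n$ there is a group element $g \in G$ pushing the geodesic ray $\tilde g_{\xi,z}$ so that a segment of $g(\tilde g_{\xi,z})$ runs within a small distance $\eps_n \to 0$ of the axis of $h$ for a time interval of length $L_n \to \infty$. Translating this back, the ray $\tilde g_{\xi,z}$ itself shadows a translate of the axis of $g^{-1} h g$. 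Now the crucial use of normality enters: $g^{-1} h g \in N$ because $N \trianglelefteq G$, so this translated axis is the axis of an element of $N$. Shadowing the axis of an element of $N$ means that, in the quotient $M_N$, the geodesic ray repeatedly runs alongside preimages of a fixed closed geodesic, and by making a detour via a power $(g^{-1}h g)^{k}$ one produces, for each $n$, an orbit point of $N$ lying deep inside a horoball at $\xi$; equivalently $\varphi^N_\xi(t_n) - t_n$ is forced to be very negative.

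The quantitative heart of the argument is to control how negative $\varphi^N_\xi(t_n) - t_n$ becomes, and to check that it tends to $-\infty$ rather than merely staying bounded above (which would only yield $\LH(N)$, the weaker conclusion already present in \cite{mat02}). The point is that shadowing for an arbitrarily long time $L_n$ near the axis allows one to apply an arbitrarily high power of the shadowed element of $N$; each additional power translates the orbit point further into the horoball, so the Busemann value $b(\cdot)$ of the resulting $N$-orbit point can be driven below any prescribed level. This is precisely where the Myrberg condition does strictly more work than the radial condition: radiality only guarantees bounded recurrence, hence a single detour of bounded depth, whereas the Myrberg property supplies excursions of unbounded length near the target geodesic, hence detours of unbounded depth. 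I would make this precise with a hyperbolic-geometry estimate relating the shadowing parameters $(\eps_n, L_n)$ to the Busemann value of $(g^{-1}hg)^{k_n} z$, choosing $k_n$ growing with $L_n$.

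The main obstacle I anticipate is the uniformity and the passage to the limit in this estimate. One must ensure that the small transverse error $\eps_n$ incurred by approximating the target in $VC$ does not accumulate destructively when composed with a large power $k_n$ of the loxodromic element, since iterating an isometry can amplify a fixed transverse displacement. Controlling this requires a careful choice balancing $k_n$ against $L_n$ and $\eps_n$, using that points within $\eps_n$ of the axis stay within a comparable distance of the axis under the translation along it (the axis is the fixed geodesic of the loxodromic, so nearby geodesic segments contract toward it in the relevant direction). A secondary technical point is handling the case where $N$ might contain only parabolic or elliptic non-trivial elements; since $G$ is non-elementary and $N$ is normal and non-trivial, $N$ is itself non-elementary and contains loxodromic elements, so a loxodromic target always exists, but I would state this reduction explicitly. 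Once the estimate gives $\varphi^N_\xi(t_n) - t_n \to -\infty$ along the sequence $t_n \to \infty$, Proposition~\ref{smallhoro} yields $\xi \in \Lh(N)$, completing the proof.
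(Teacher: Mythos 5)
Your proposal is correct and follows essentially the same route as the paper's proof: the paper also picks a loxodromic $h\in N$, uses the Myrberg property to shadow arbitrarily long segments $[n^{-k}(o),n^{k}(o)]$ of its axis by elements $g_k\in G$, and then uses normality to form the conjugates $g_k n^{\pm k}g_k^{-1}\in N$ (your $(g^{-1}hg)^{k_n}=g^{-1}h^{k_n}g$) whose orbit points penetrate arbitrarily deep horoballs at $\xi$. Your key observation --- that the Myrberg property supplies shadowing of unbounded length, hence detours of unbounded depth, which is exactly what upgrades the conclusion from $\LH(N)$ to $\Lh(N)$ --- is precisely the mechanism the paper exploits, and your worry about transverse error amplification is resolved there by applying the single isometry $g_k n^{\pm k}g_k^{-1}$ to the segment endpoints and using exact isometry identities plus the $\eps$-closeness only additively.
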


We first give a geometric explanation in the manifolds.
Since $N$ is a non-trivial normal subgroup of $G$,
it is non-elementary and hence it contains a loxodromic element $h$.
Let $c$ be the closed geodesic in $M_N$ corresponding to $h$ and 
$\hat c$ the projection of $c$ under the normal covering $M_N \to M_G$, 
which may be a multi-curve.
Take any Myrberg limit point $\xi \in \LM(G)$ and a geodesic ray in ${\mathbb B}^{n+1}$
starting from $z \in {\mathbb B}^{n+1}$ and
towards $\xi \in {\mathbb S}^n$. Then its projection to $M_G$ 
follows $\hat c$ infinitely many times
within an arbitrarily small tubular neighbourhood. 

We consider a lift of this geodesic ray to $M_N$, which is denoted by
$g_{\xi,z}(t)$ $(t \geq 0)$. This goes along some of the images of $c$ under the covering transformations of $M_N \to M_G$ infinitely many times
within arbitrarily small tubular neighbourhoods. This implies that once 
$g_{\xi,z}(t)$ turns around a copy of $c$,
we can find a geodesic between $g_{\xi,z}(0)$ and $g_{\xi,z}(t)$ 
which is shorter than $t$ by some uniform length $\ell>0$.
Namely, $\varphi_\xi(t)=d(g_{\xi,z}(t),g_{\xi,z}(0))$ satisfies $\varphi_\xi(t) \leq t-\ell$
at the time $t$ when we finish one round. 
However, since such detours occur infinitely many times,
we have that $\lim_{t \to \infty} (\varphi_\xi(t)-t)=-\infty$. 
By Proposition \ref{smallhoro},
this shows that $\xi \in \Lh(N)$.

\begin{proof}[Proof of Theorem~\ref{Myr-in-horo}]
Consider some arbitrary $\xi\in\LM(G)$.
$G$ is assumed to be non-elementary and $N$ to be non-trivial, so $N$ will always contain
hyperbolic elements. Let $n\in N$ be one of these and consider the uniquely determined 
point $o$ on its axis so that the geodesic ray $[o,\xi)$ from $o$ to $\xi$ is orthogonal 
on the axis of $n$. For all $k\in\N$ put $x_k:=n^k(o)$ and $x'_k:=n^{-k}(o)$ and denote 
the geodesic segment connecting $x'_k$ and $x_k$ by $[x'_k,x_k]$.

Given some arbitrary but fixed $\eps>0$, we have 
by the Myrberg property of $\xi$ that there is a sequence $(g_k)_{k\in\N}$ of elements
of $G$ so that $g_k(o)$ tends to $\xi$ in the Euclidean metric and, for all $k\in\N$, 
the geodesic segment $g_k([x'_k,x_k])$ is \emph{$\eps$-close} to $[o,\xi)$, 
meaning that any point on $g_k([x'_k,x_k])$ is within distance $\eps$ from the 
geodesic ray $[o,\xi)$.

\begin{figure}
\includegraphics{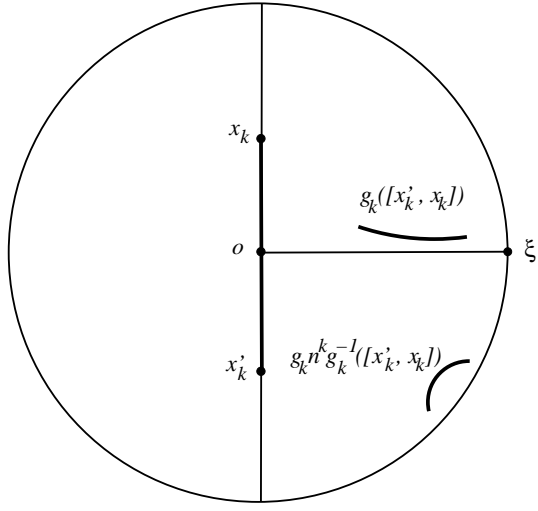}
    \label{Myr-in-horo-fig}
    \caption{The setting of Theorem~\ref{Myr-in-horo}.}
\end{figure}

Since $N$ is normal in $G$, we know that $g_k n^k g_k^{-1}, g_k n^{-k} g_k^{-1}\in N$ for all $k\in \N$. 
A priori, it is not clear how $g_k([x'_k,x_k])$ is `oriented' with respect to $[o,\xi)$, but we 
shall see that this is not important for the argument. For a given $k\in\N$, assuming $g_k(x'_k)$ is 
closer to $o$ than $g_k(x_k)$, we have that
\begin{align*}
d (g_k n^k g_k^{-1}(x_k), g_k(o))
&= 
d(n^k g_k^{-1}(x_k), o) 
= 
d(n^k g_k^{-1}(x_k), n^k(x'_k)) \\
&=\ 
d(g_k^{-1}(x_k), x'_k)  
= 
d (x_k, g_k(x'_k)) \\
&\asymp_+ d(o,g_k(o))
\end{align*}
and that
\begin{align*}
d (g_k n^k g_k^{-1}(x'_k), g_k(x_k))
&= 
d (n^k g_k^{-1}(x'_k), x_k)
=
d(n^k g_k^{-1}(x'_k), n^k(o)) \\
&= 
d(g_k^{-1}(x'_k), o)   
= 
d (x'_k, g_k(o)) \\
&\asymp_+
d(o,g_k(x_k)).
\end{align*}
Here, the additive comparabilities $\asymp_+$,
which mean that the difference between the comparable distances is uniformly bounded
independently of $k$,
are due to the fact that 
$[o,\xi)$ is orthogonal on the axis of $n$ and $g_k([x'_k,x_k])$
is $\eps$-close to $[o,\xi)$. 
If $g_k(x_k)$ is closer to $o$ than $g_k(x'_k)$, then the same argument as above yields that
$$
d (g_k n^{-k} g_k^{-1}(x'_k), g_k(o)) \asymp_+ d(o,g_k(o));\ 
d (g_k n^{-k} g_k^{-1}(x_k), g_k(x'_k)) \asymp_+ d(o,g_k(x'_k)).
$$

From these estimates, elementary hyperbolic geometry shows that
there is some horosphere $H$ tangent at $\xi$ such that either both 
$g_k n^k g_k^{-1}(x_k)$ and 
$g_k n^k g_k^{-1}(x'_k)$ or both $g_k n^{-k} g_k^{-1}(x_k)$ and 
$g_k n^{-k} g_k^{-1}(x'_k)$
lie inside of $H$ for all $k$.
Since
$$
d(g_k n^k g_k^{-1}(x_k),g_k n^k g_k^{-1}(x'_k))=
d(g_k n^{-k} g_k^{-1}(x_k),g_k n^{-k} g_k^{-1}(x'_k))=
d(x_k,x'_k) \to \infty 
$$
as $k \to \infty$, the mid points $g_k n^k g_k^{-1}(o)$ or $g_k n^{-k} g_k^{-1}(o)$
of the geodesic segments enter smaller and smaller horospheres tangent at $\xi$.
This can be easily seen if we use the upper half-space model of the hyperbolic space
with $\xi$ being infinity. Hence we have that $\xi \in L_h(N)$. 
\end{proof}

As a direct corollary to Theorem~\ref{Myr-in-horo}, we obtain that the difference between
the big horospheric and the horospheric limit sets of $N$ is a null set for 
the Patterson measure $\mu$ for $G$ when $G$ is of divergence type. 
Also, since $N$ is normal in $G$, we have that $L(N)=L(G)$ and that $\mu$ is a 
conformal measure of dimension $\delta(G)$ for $N$ as well. 
Note that $\delta(N)$ may very well 
be strictly smaller than $\delta(G)$, in particular, 
when $G$ is convex cocompact and $G/N$ is non-amenable (\cite{bro85}).
Concerning the investigation of this phenomenon in view of the dimension gap 
between $L(N)$ and $L_r(N)$, see for instance \cite{fm15} or the survey \cite{falk18}.

\begin{theorem}
\label{garnett}
Let $N$ be a non-trivial normal subgroup of the Kleinian group $G$, and assume that $G$ is of 
divergence type. Then,  
$$
\mu^{\delta(G)}(\LH(N)\setminus \Lh(N))=0
$$
for some $N$-invariant conformal measure $\mu^{\delta(G)}$ of dimension $\delta(G)$.
\end{theorem}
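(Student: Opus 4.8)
The plan is to assemble Theorem~\ref{garnett} from the pieces already established, so the proof is essentially a matter of correctly threading together Theorem~\ref{Myr-in-horo}, the ergodicity theorem (Theorem~\ref{ergodic}), and the facts recalled about the Myrberg limit set. First I would fix a Patterson measure $\mu$ of dimension $\delta(G)$ for $G$; since $G$ is of divergence type this measure exists, is unique up to scaling, and by the cited results of \cite{tuk94a}, \cite{str97a}, \cite{falk05} the Myrberg limit set $\LM(G)$ has full $\mu$-measure, i.e. $\mu(\mathbb S^n \setminus \LM(G))=0$. The key structural observation is that because $N$ is normal in $G$ we have $L(N)=L(G)$, and $\mu$ — being a conformal density of dimension $\delta(G)$ — is simultaneously an $N$-invariant conformal measure of dimension $\delta(G)$ for $N$; this is exactly the measure $\mu^{\delta(G)}$ named in the statement.

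\medskip
The heart of the argument is then the inclusion chain. By Theorem~\ref{Myr-in-horo} we have $\LM(G)\subset \Lh(N)$, and by definition $\Lh(N)\subset \LH(N)\subset \mathbb S^n$. Taking complements, I would write
$$
\LH(N)\setminus \Lh(N) \subset \mathbb S^n \setminus \Lh(N) \subset \mathbb S^n \setminus \LM(G).
$$
Since the rightmost set is $\mu$-null by the full-measure property of $\LM(G)$, monotonicity of $\mu$ immediately gives $\mu(\LH(N)\setminus \Lh(N))=0$, which is precisely the assertion. So the proof reduces to invoking Theorem~\ref{Myr-in-horo} together with the divergence-type characterisation of when $\LM(G)$ carries full measure.

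\medskip
The one point that genuinely needs care — and which I expect to be the only real obstacle — is justifying that $\LM(G)$ has full $\mu$-measure \emph{with $\mu$ being the Patterson measure of the overgroup $G$}, rather than of $N$. Here the hypothesis that $G$ (not $N$) is of divergence type is exactly what is needed: the equivalence ``divergence type $\iff$ $\LM$ of full Patterson measure'' is applied to $G$, yielding $\mu(\LM(G))=\mu(\mathbb S^n)$. I would emphasise in the write-up that this is why the difference $\LH(N)\setminus\Lh(N)$ is measured against a $\delta(G)$-dimensional measure and not a $\delta(N)$-dimensional one — the forthcoming Proposition~\ref{Myr-of-dim-delta} and the introductory discussion make clear that $\delta(N)$ may be strictly smaller, so no control is available from the $N$-side. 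Everything else (the set-theoretic inclusion, conformality of $\mu$ for $N$, equality of limit sets) is routine given the normality of $N$ and the results recalled above.
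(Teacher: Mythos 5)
Your proposal is correct and follows essentially the same route as the paper, which presents Theorem~\ref{garnett} as a direct corollary of Theorem~\ref{Myr-in-horo}: the Patterson measure $\mu$ of the divergence-type group $G$ gives full measure to $\LM(G)$ by \cite{tuk94a}, \cite{str97a}, \cite{falk05}, the inclusion $\LM(G)\subset\Lh(N)$ then forces $\mu(\LH(N)\setminus\Lh(N))=0$, and normality of $N$ guarantees that $\mu$ is an $N$-invariant conformal measure of dimension $\delta(G)$. Your additional remarks on why one must measure against a $\delta(G)$-dimensional (rather than $\delta(N)$-dimensional) measure match the paper's own discussion.
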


The following class of examples illustrates the statements of Theorem~\ref{Myr-in-horo} 
and Theorem~\ref{garnett}  in a non-trivial way.

\begin{example} 
\label{expl1}
Let $G_{0}$ and $G_{1}$ be Schottky groups 
with fundamental domains having disjoint complements in 
hyperbolic space,
and define
$G:= G_0 * G_1$ which is then also a Schottky group.
Put $N:= \ker(\varphi )$, where $\varphi : G \to G_{1}$ is the canonical 
group homomorphism. Thus, $0\to N\to G\to G_1\to 0$ 
is a short exact sequence,
$N$ is the normal subgroup of $G$ generated by
$G_{0}$ in $G$, and 
$G / N \cong G_{1}$. Clearly, 
$N$ is infinitely generated and if we assume that $G_1$ is freely generated 
by at least two generators,
and is thus non-amenable, then the already mentioned result of Brooks~\cite{bro85} ensures that
$\delta(N)<\delta(G)$. For more details on this class of examples see also \cite{fs04}.
Theorem~\ref{Myr-in-horo} now applies and we thus have that 
$\LM(G) \subset \Lh(N)$. We also know on the one hand
that in this situation  the Hausdorff dimension of $\LM(G)$ coincides
with $\delta(G)$ since $G$ is cocompact and thus $\LM(G)$ is of full measure w.r.t. 
the Patterson measure of $G$ (\cite{tuk94a}, \cite{str97a} and \cite{falk05}) which is 
known \cite{sul79a} to be proportional to the $\delta(G)$-dimensional Hausdorff measure on 
$L(G)$. On the other hand, we know by \cite{bj97} that $\dim_H(\Lr(N))=\delta(N)$.
We thus know that $\Lr(N)$ has strictly smaller Hausdorff dimension than both
$\LM(G)$ and $\Lh(N)$ which makes the statements of both 
Theorem~\ref{Myr-in-horo} and Theorem~\ref{garnett} meaningful and non-trivial.
\end{example}

Concerning the difference between the big and the small horospheric limit sets in 
Theorem~\ref{garnett},
we are considering the situation where $N$ is contained as a normal subgroup 
in some Kleinian group $G$, and the limit sets are measured by a conformal measure for $G$. However, it is desirable to describe
the difference between these limit sets only by using the Kleinian group in question itself.
Here is an idea how to do this, which makes use of our previous work \cite{fm15}.

We call a discrete $G$-invariant set $X=\{x_i\}_{i=1}^{\infty}$ in the convex hull 
$H(L(G))$ 
of $L(G)$ \emph{uniformly distributed} if the following two 
conditions are satisfied:
\begin{itemize}
\item [(i)]
There exists a constant $M<\infty$ such that, for every point 
$z \in H(L(G))$, there is some $x_i \in X$ such that $d(x_i,z)\leq M$;
\item [(ii)]
There exists a constant $m>0$ such that, any distinct points 
$x_i$ and $x_j$ in $X$ satisfy $d(x_i,x_j) \geq m$.
\end{itemize}

For a uniformly distributed set $X$, we define the 
\emph{extended Poincar\'e series} with exponent $s>0$ and reference point 
$z \in \B^{n+1}$ by
$$
P^s(X,z):=\sum_{x \in X}e^{-s\,d(x,z)}.
$$
The \emph{critical exponent} for $X$ is
$$
\Delta :=\inf \{s>0 \mid P^s(X,z)<\infty\}.
$$
The Poincar\'e series for $X$ is of \emph{convergence type} if $P^\Delta(X,z)<\infty$, and of
\emph{divergence type} otherwise.
Moreover, we can define the associated Patterson measure $\mu_X$ for $X$
supported on $L(G)$ by a similar construction to the usual case.

As a sufficient condition for the extended Poincar\'e series 
$P^s(X,z)$ to be of divergence type, we have the following.
A uniformly distributed set $X$ is of {\em bounded type}
if there exists a constant $\rho \geq 1$ such that
$$
\frac{\card (X \cap B_R(x))}{\card (X \cap B_R(z))} \leq \rho
$$
for every $x \in X$ and for every $R>0$. 
In this case, the $\Delta$-dimensional Hausdorff measure of $L(G)$ is positive and
$P^\Delta(X,z)=\infty$. For more details see \cite{fm15}.

In view of these similarities to the case where our group in question is a normal subgroup of
some Kleinian group of divergence type, 
we give the following conjecture in analogy to Theorem \ref{garnett}.

\begin{conjecture*}
\label{conj1}
If $G$ is a Kleinian group whose convex hull $H(L(G))$ admits a uniformly distributed set $X$
so that the extended Poincar\'e series $P^s(X,z)$ is of divergence type, then
$$
\mu_X(\LH(G)\setminus\Lh(G))=0
$$
for the associated Patterson measure $\mu_X$.
\end{conjecture*}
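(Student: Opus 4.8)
The plan is to reproduce the two-step architecture of Theorem~\ref{garnett}. That theorem was obtained by combining the geometric inclusion $\LM(G)\subset\Lh(N)$ of Theorem~\ref{Myr-in-horo} with the fact that, $G$ being of divergence type, $\LM(G)$ has full $\mu^{\delta(G)}$-measure; then $\LH(N)\setminus\Lh(N)\subset\S^n\setminus\Lh(N)\subset\S^n\setminus\LM(G)$ is a nullset. The feature to respect here is that the full-measure witness $\LM(G)$ lives at the dimension $\delta(G)$ of the measure, while the naive candidate $\Lr(N)$, of the strictly smaller dimension $\delta(N)$, is $\mu^{\delta(G)}$-null. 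The present conjecture is modelled on exactly this: when the group $G$ of the statement is a normal subgroup of a convex cocompact group $\Gamma$ and $X$ is comparable to the orbit $\Gamma z$, one has $\Delta=\delta(\Gamma)$ while $\delta(G)$ may be strictly smaller, recovering Theorem~\ref{garnett}. Consequently the full-measure set we must produce inside $\Lh(G)$ has to have Hausdorff dimension $\Delta$; in particular none of $\Lr(G)$, $\LM(G)$, $\Lambda_0(G)$ or $\Lambda_*(G)$ can serve, since by Proposition~\ref{estimate} and the inclusion relations of Lundh all of them have dimension at most $\delta(G)$, which may lie strictly below $\Delta$.

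Accordingly, the first task is to isolate an $X$-adapted Myrberg set $\Lambda^{X}(G)\subset\S^n$ --- the end points of geodesic rays that recurrently realise, against the net $X$ filling $H(L(G))$, every local configuration, and which in the model case specialises to the Myrberg set $\LM(\Gamma)$ of the ambient group. Granting this, the proof splits as before. The geometric step is the inclusion $\Lambda^{X}(G)\subset\Lh(G)$, for which one would re-run the detour argument of Theorem~\ref{Myr-in-horo}: from a fixed loxodromic element of $G$ and the recurrence of the geodesic towards $\xi$ to long segments of its axis, one seeks to push orbit points of $Gz$ into ever smaller horoballs tangent at $\xi$, so that $\liminf_{t\to\infty}(\varphi_\xi(t)-t)=-\infty$ and hence $\xi\in\Lh(G)$ by Proposition~\ref{smallhoro}. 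The measure step is that $\mu_X(\Lambda^{X}(G))=\mu_X(\S^n)$ whenever $P^s(X,z)$ is of divergence type; this is the analogue, for the net $X$ and its Patterson measure $\mu_X$ of dimension $\Delta$, of the equivalence between divergence type and full Patterson measure of the Myrberg set (\cite{tuk94a},\cite{str97a},\cite{falk05}). I would attack it through a Sullivan-type shadow lemma for $\mu_X$ feeding a divergence Borel--Cantelli argument, with the bounded-type hypothesis of \cite{fm15} --- which forces $P^\Delta(X,z)=\infty$ and positive $\Delta$-dimensional Hausdorff measure of $L(G)$ --- supplying the quantitative control.

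The hard part is reconciling these two steps, and it is exactly the point at which the normal-subgroup proof used more than is available here. In Theorem~\ref{Myr-in-horo} the orbit points thrown into small horoballs were $g_k n^{\pm k} g_k^{-1}(o)$ with $g_k$ in the ambient group and $n$ loxodromic in $N$; that these land in $N$, and are therefore genuine $\Lh(N)$-witnesses, is forced purely by the \emph{normality} of $N$, and it is precisely this mechanism that converts recurrence at the large dimension $\delta(\Gamma)=\Delta$ into horospheric witnesses for the smaller group. A bare uniformly distributed set $X$ carries no such algebraic structure: conjugating a loxodromic of $G$ by an element transporting its axis along the net $X$ need not return an element of $G$, so $X$-recurrence does not automatically produce $Gz$-points inside the horoballs at $\xi$. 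Manufacturing a substitute for normality out of the uniform distribution and divergence hypotheses --- enough to guarantee, for $\mu_X$-almost every $\xi$, the group-theoretic winding recurrence required by the geometric step --- is the central obstacle, and is what keeps the statement conjectural. A secondary difficulty is that Theorem~\ref{ergodic} and Hopf's ratio ergodic theorem are formulated for genuine group orbits, so the conservativity and ergodicity of the geodesic flow with respect to $\mu_X$ would first have to be established directly from the divergence of the extended Poincar\'e series.
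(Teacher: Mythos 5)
There is no proof in the paper to compare your attempt against: the statement you were given is posed there as an open conjecture. The only evidence the paper offers is the closing remark that when $L(G)=\mathbb S^n$ one may take $\mu_X$ to be the Lebesgue measure, in which case the assertion reduces to Sullivan's classical result on Garnett points; beyond that, the statement is proved only in the special situation of Theorem~\ref{garnett}, namely when the group in question sits as a non-trivial normal subgroup inside a Kleinian group of divergence type, the uniformly distributed set playing the role of the ambient orbit. Your proposal does not close this gap either --- and, to your credit, it does not claim to. What you have written is a strategy sketch plus a diagnosis of the obstruction, and both agree with the paper's own framing: the two-step architecture (a geometric inclusion of a recurrence-type limit set into the horospheric limit set, followed by a full-measure statement for that set under divergence) is exactly how Theorem~\ref{garnett} follows from Theorem~\ref{Myr-in-horo}, and your observation that the witnesses $g_k n^{\pm k}g_k^{-1}(o)$ in the proof of Theorem~\ref{Myr-in-horo} land in the subgroup only because of \emph{normality} identifies precisely the mechanism that has no analogue for a bare uniformly distributed set $X$, which is $G$-invariant as a set but carries no algebraic structure of its own.

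Two smaller remarks on the parts you do argue. Your dimension count for why $\Lr(G)$, $\LM(G)$, $\Lambda_0(G)$ and $\Lambda_*(G)$ cannot serve as the full-measure witness is correct in spirit (Proposition~\ref{estimate} together with Lundh's inclusions bounds all of them by $\delta(G)$, which may lie strictly below $\Delta$), but to make it rigorous you would still need a mass-distribution argument showing that any set of Hausdorff dimension strictly less than $\Delta$ is $\mu_X$-null; that in turn requires the shadow lemma for $\mu_X$ which you only allude to, and which for a general uniformly distributed set is not in the literature (the bounded-type condition of \cite{fm15} gives divergence and positive Hausdorff measure, not a two-sided shadow estimate). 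Likewise, even the definition of your $X$-adapted Myrberg set $\Lambda^{X}(G)$ --- measurable, $G$-invariant, specialising to $\LM(\Gamma)$ in the normal-subgroup model case --- is left open, and without a group acting transitively on $X$ it is unclear what ``recurrently realising every local configuration against $X$'' should mean. In short: your text is a sensible research plan that correctly explains why the statement is a conjecture, but it is not a proof, and the paper contains none.
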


If $L(G)=\mathbb S^n$, then we can choose the Lebesgue measure 
on $\mathbb S^n$ as $\mu_X$.
In this case, the original result of Sullivan \cite{sul81a} on Garnett points 
supports the conjecture.

\section{The Hausdorff dimension of the Myrberg limit set}
\label{Myr-of-dim-delta-section}


In this section we justify why in the previous section we considered conformal measures 
of dimension $\delta(G)$ in order to measure the difference between
the big horospheric and horospheric limit sets of $N$. 
Namely, we will show under a certain assumption
that the Hausdorff dimension of $\LM(G)$,
and thus, in view of Theorem~\ref{Myr-in-horo}, of both $\Lh(N)$ and $\LH(N)$, 
is equal to $\delta(G)$.

\begin{proposition}
\label{Myr-of-dim-delta}
If $G$ is a Kleinian group of divergence type such that the strong sublinear
growth limit set $\Lambda_*(G)$
has full measure for the Patterson measure $\mu$ of $G$, then
$$
\dim_H(\LM(G))=\delta(G).
$$
All assumptions on $G$ follow from the condition 
$\mu_*(T_1 M_G)<\infty$. 
\end{proposition}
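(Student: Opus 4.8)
The plan is to establish the two inequalities $\dim_H(\LM(G)) \leq \delta(G)$ and $\dim_H(\LM(G)) \geq \delta(G)$ separately, using very different tools. The upper bound is essentially free: since $\LM(G) \subset L_r(G) = \Lr^{(0)}(G)$ as noted in Section~\ref{stratification-section}, Proposition~\ref{estimate} with $\kappa = 0$ immediately gives $\dim_H(\LM(G)) \leq \dim_H(\Lr^{(0)}(G)) \leq \delta(G)$. So the entire content lies in the lower bound, and this is where the hypotheses on divergence type and on $\Lambda_*(G)$ must be used.

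For the lower bound, the natural strategy is a mass distribution (Frostman-type) argument using the Patterson measure $\mu$ itself. First I would invoke the characterisation of divergence type from Theorem~\ref{ergodic}: since $G$ is of divergence type, $\mu(L_r(G)) = \mu(\S^n)$, and moreover (by \cite{tuk94a}, \cite{str97a}, \cite{falk05}, as cited in the introduction) the Myrberg limit set $\LM(G)$ has full Patterson measure. Combined with the standing hypothesis that $\Lambda_*(G)$ has full $\mu$-measure, it follows that the intersection $\LM(G) \cap \Lambda_*(G)$ already carries all of the mass $\mu$. The key point is then that on $\Lambda_*(G)$ the geodesics escape \emph{strictly sublinearly}, i.e. $\varphi_\xi(t)/t \to 0$, which forces the orbit counting in shadows to behave, to first exponential order, exactly like the radial case with $\kappa = 0$. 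Concretely, for $\xi \in \Lambda_*(G)$ and a shadow $S(g z : c, 0)$ of Euclidean radius $r$, the sublinear escape gives that the exponent relating $\mu(S)$ to $r$ is $\delta(G) + o(1)$ rather than $(1+\kappa)\delta(G)$ with a positive $\kappa$. I would make this precise via the Sullivan shadow lemma, controlling $\mu(S(gz:c,0))$ by $e^{-\delta(G)\, d(o,gz)}$ up to bounded multiplicative error, and then translating sublinear escape into the statement that almost every $\xi$ admits arbitrarily small balls $B(\xi,r)$ with $\mu(B(\xi,r)) \leq r^{\delta(G) - \eps}$ for every $\eps > 0$.

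Having produced such a Frostman exponent on a set of full $\mu$-measure, the standard mass distribution principle yields $\dim_H(E) \geq \delta(G) - \eps$ for the subset $E$ on which the estimate holds with a uniform scale, and letting $\eps \to 0$ together with a decomposition of the full-measure set into such uniform pieces gives $\dim_H(\LM(G)) \geq \delta(G)$. Finally, for the last sentence of the proposition, I would recall that finiteness of $\mu_*$ on $T^1 M_G$ forces ergodicity of the geodesic flow, hence divergence type by Theorem~\ref{ergodic}; and that in the finite-measure case the Hopf ratio ergodic theorem applied to the displacement cocycle $\varphi_\xi(t)$ yields $\varphi_\xi(t)/t \to 0$ for $\mu$-almost every $\xi$, i.e. $\mu(\Lambda_*(G)) = \mu(\S^n)$, establishing both standing hypotheses at once.

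\textbf{Main obstacle.} The delicate step is converting the pointwise sublinear escape condition defining $\Lambda_*(G)$ into a \emph{uniform} Frostman estimate valid on a set of positive measure at a common scale. Pointwise one only knows $\varphi_\xi(t)/t \to 0$, with no rate, so the passage from $\mu$ being a $\delta(G)$-conformal measure to the Hausdorff-dimension lower bound requires an Egorov-type argument to extract, for each $\eps > 0$, a set of positive measure on which the shadow estimate $\mu(S(gz:c,0)) \leq (\mathrm{diam}\, S)^{\delta(G)-\eps}$ holds uniformly once the diameter is small enough; carefully matching the hyperbolic displacement $d(o,gz)$ to the Euclidean shadow radius through the sublinear correction is the crux of the argument.
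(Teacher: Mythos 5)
Your proposal is correct and follows essentially the same route as the paper: the upper bound via $\LM(G)\subset L_r(G)$ and the Bishop--Jones/shadow estimate, and the lower bound via the full $\mu$-measure of $\LM(G)\cap\Lambda_*(G)$, a shadow-lemma Frostman estimate driven by sublinear escape, and the mass distribution principle on a positive-measure compact subset with uniform estimates. The Egorov-type extraction you identify as the crux is exactly what the paper obtains by invoking Nicholls' Lemma~9.3.4 and Theorem~9.3.5, and your justification of the final statement (finiteness of $\mu_*$ implying divergence type and $\mu(\Lambda_*(G))$ full) matches the paper's appeal to Sullivan and Nicholls.
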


The method of proof is a generalisation of the argument for the radial limit set
given in Sullivan \cite{sul79a}. (See also Nicholls \cite[Th.9.3.5]{nic89}.) Note that 
Sullivan already conjectures in the original paper that for any divergence type group $G$
the strong sublinear
growth limit set $\Lambda_*(G)$ is of full measure for the Patterson measure of $G$. 
That is, Proposition \ref{Myr-of-dim-delta} should be valid without 
the extra assumption on $\Lambda_*(G)$.
One can justifiably ask why this is not clear for divergence type groups in general.
The reason is that for geometrically infinite groups $G$, the radial limit set
$L_r(G)$ is not necessarily
contained in $\Lambda_*(G)$ as the following example shows.
However, we expect that $\LM(G) \subset \Lambda_*(G)$ should be true. This can still be
regarded as a generalisation of Sullivan's conjecture.

\begin{example}
\label{expnew}
Let $T$ be a once-punctured torus, and
let $a$ and $b$ be simple closed geodesics on $T$
whose intersection number is 1.
We cut open $T$ along $a$ to obtain a bordered surface $P$ with 
one puncture and
two geodesic boundary components.

We prepare infinitely many copies of $P$ and paste them one after another along the geodesic boundary components without a twist.
The resulting surface is a cyclic cover of $T$, which is
denoted by $R$. Let $\langle h \rangle$ be the covering transformation group.
The lift of $b$ to $R$, which is a geodesic line invariant under $\langle h \rangle$, is denoted by
$\tilde b$. We also take a simple closed geodesic $a_0$ that is a component of the lift of $a$ and
set the intersection of $\tilde b$ and $a_0$ as a base point $o$. Set $a_n=h^n(a_0)$ and $o_n=h^n(o)$ for
every integer $n$.

We consider the following infinite curve starting at $o$: 
$$
\beta_0=\prod_{k=0}^\infty (\tilde b[o,o_{(-2)^k}] \cdot a_{(-2)^k} \cdot \tilde b[o_{(-2)^k},o]).
$$
Here,
$\tilde b[x,y]$ denotes the segment in $\tilde b$ from $x$ to $y$. 
Then, we take the geodesic ray $\beta:[0,\infty) \to R$ starting from $o$ and going to infinity navigated homotopically by 
$\beta_0$.
Since $\beta$ returns infinitely many times to some neighbourhood of $o$, the end point of $\beta$ gives
a radial limit point $\xi \in L_r(G)$ of a Fuchsian group $G$ uniformising $R$.

\begin{figure}
\includegraphics{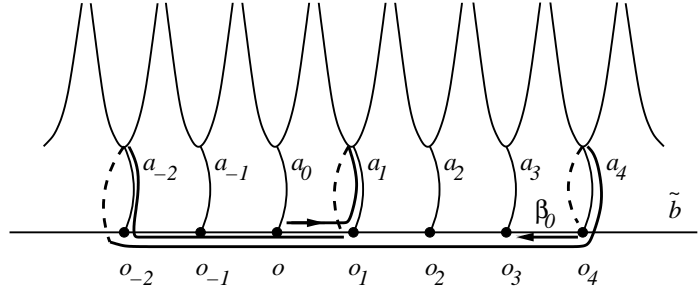}
    \label{expl}
    \caption{The setting of Example~\ref{expnew}.}
\end{figure}

On the other hand, $\xi$ does not belong to the strong sublinear growth limit set $\Lambda_*(G)$ of $G$.
To see this, for every $n \geq 1$,
let $t_n > 0$ be the arc length parameter of 
the geodesic ray $\beta=g_{\xi,o}$ such that $\beta(t)$ crosses over $a_{(-2)^n-(-1)^n}$ for the first time. 
We denote the hyperbolic length by $\ell(\cdot)$ and the hyperbolic distance by $d(\cdot,\cdot)$.
Then, we have that
\begin{align*} 
t_n &\leq \sum_{k=0}^{n-1} \{2\ell(\widetilde b[o,o_{(-2)^k}])+\ell(a)\}+\ell(\widetilde b[o,o_{(-2)^n}])+\ell(a)\\
&\leq 3 \cdot 2^n \ell(b)+(n+1)\ell(a)\\
&< 3 \cdot 2^n \{\ell(b)+\ell(a)\}.
\end{align*}
However, since
$d(o, \beta(t_n)) \geq (2^n-1) d(a_0,a_1)$, we obtain that
$$
\frac{d(o, \beta(t_n))}{t_n} > \frac{1}{4}\frac{d(a_0,a_1)}{\ell(b)+\ell(a)}
$$
for every $n \geq 1$. This implies that
$$
\limsup_{t \to \infty} \frac{\varphi_\xi(t)}{t} >0,
$$
and hence $\xi \notin \Lambda_*(G)$.
\end{example}

\begin{proof}[Proof of Proposition~\ref{Myr-of-dim-delta}]
It is proved in \cite{tuk94a}, \cite{str97a}
and \cite{falk05} that
for a Kleinian group $G$ of divergence type,
the Myrberg limit set $\LM(G)$ has full measure w.r.t. the Patterson measure $\mu$ of $G$.
Moreover, by assumption, the sublinear
growth limit set $\Lambda_*(G)$
has full $\mu$-measure. 

Following the argument from \cite[Lemma 9.3.4]{nic89},
we can find a compact subset $K$ of $\LM(G) \cap \Lambda_*(G)$ with
$\mu(K)>0$ satisfying the following property: for any $\varepsilon>0$ there exists $r_0>0$ 
such that if $\xi \in K$ and $r<r_0$ then 
$$
\mu(B(\xi,r) \cap K)/r^{\delta(G)-\varepsilon}<A,
$$
where $A$ is some absolute constant.
From this property, it follows that $K$ has positive $(\delta(G)-\varepsilon)$-dimensional
Hausdorff measure for any $\varepsilon>0$ (see \cite[Th.9.3.5]{nic89}). 
Hence $\dim_H \LM(G) \geq \delta(G)$. 
The converse inequality is clear from $\dim_H L_r(G) = \delta(G)$ and $\LM(G) \subset L_r(G)$, 
and thus the first statement follows.

To verify the latter statement, it suffices to remark that
$\mu_*(T_1 M_G)<\infty$ implies 
that $G$ is of divergence type and that $\Lambda_*(G)$ has full $\mu$-measure
(see \cite[Cor.19]{sul79a}, \cite[Lemma 9.3.3]{nic89}).
\end{proof}

\begin{example} 
Here it is interesting to mention a class of non-trivial examples 
for which the statement of Proposition~\ref{Myr-of-dim-delta} applies.
In \cite{pei03} Peign{\'e} constructs geometrically infinite Schottky groups $G$
of divergence type which at the same time satisfy $\mu_*(T_1 M_G)<\infty$.
For more details we refer the interested reader to the original article \cite{pei03}.
\end{example}

Following a completely different idea of proof, 
it may be possible to generalise the argument in Bishop and Jones \cite{bj97} showing that
the the Hausdorff dimension of the radial limit set coincides with the critical
exponent, in order to prove the following conjecture.

\begin{conjecture*}
\label{conj2}
For any non-elementary Kleinian group $G$ we have
$$
\dim_H(\LM(G))= \delta(G).
$$
\end{conjecture*}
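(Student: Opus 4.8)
The plan is to prove $\dim_H(\LM(G))=\delta(G)$ for an arbitrary non-elementary Kleinian group by adapting the Bishop--Jones construction of a Cantor-type subset of the radial limit set whose Hausdorff dimension is arbitrarily close to $\delta(G)$, and arranging the construction so that the resulting set lands inside $\LM(G)$ rather than merely inside $\Lr(G)$. Since $\LM(G)\subset\Lr(G)$ gives the upper bound $\dim_H\LM(G)\le\dim_H\Lr(G)=\delta(G)$ for free, the entire content is the lower bound $\dim_H\LM(G)\ge\delta(G)$, and we would prove $\dim_H\LM(G)\ge\delta(G)-\eps$ for every $\eps>0$.

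The first step is to recall the combinatorial skeleton of the Bishop--Jones argument. Fix $\eps>0$; because $\delta(G)$ is the exponential growth rate of the orbit, for a suitable large radius there exists a finite set $\{h_1,\dots,h_N\}\subset G$ of elements whose displacements $d(o,h_i(o))$ are comparable to a fixed large number $T$ and whose shadows on $\S^n$ are disjoint and uniformly separated, with $N$ large enough that $\frac{\log N}{T}>\delta(G)-\eps$. One then builds a self-similar Cantor set by concatenating words $h_{i_1}h_{i_2}\cdots h_{i_k}$, using the fact (a bounded-geometry/Schottky-type argument on the geodesics $[o,h_{i_1}\cdots h_{i_j}(o)]$) that the concatenated orbit points track a genuine geodesic ray. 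A standard mass-distribution estimate then yields $\dim_H\ge\delta(G)-\eps$ for the limit set of this symbolic dynamics, and the tracking property places it inside $\Lr(G)$.

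The key modification, and the heart of the proof, is to enforce the Myrberg property during the concatenation. Using the dynamical description of $\LM(G)$ recalled in Section~\ref{Myr-in-horo-section}, a point is Myrberg precisely when its forward geodesic orbit in $VC\subset T^1M_G$ is dense in $VC$. We would therefore fix a countable dense family $\{(v_m,p_m)\}_{m\in\N}$ of targets in $VC$ and interleave, between the free blocks of the Cantor construction, occasional \emph{guiding} words $w_m\in G$ chosen so that the geodesic passes within $1/m$ of the $m$-th target before resuming free concatenation. Because each guiding word has bounded length and is inserted only sparsely (say once per block of rapidly increasing free length), its contribution to the displacement is asymptotically negligible, so the Hausdorff dimension estimate is unaffected in the limit $\eps\to 0$, while every point of the Cantor set now has a geodesic orbit that meets every target infinitely often and is hence Myrberg.

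I expect the main obstacle to be verifying that the interleaved guiding words do not destroy the geometric separation and tracking on which the dimension lower bound rests: one must ensure that each guiding geodesic segment stays in the convex core and that the concatenation with the subsequent free block still follows a uniform-neighbourhood of a true geodesic, so that shadow sizes remain multiplicatively controlled. This requires a careful bounded-geometry argument along the spliced geodesic, most cleanly handled by taking the reference configuration deep inside $C(M_G)$ and invoking the hyperbolicity of $\B^{n+1}$ to absorb the bounded detours. Controlling this splicing uniformly, so that the density of targets is genuinely achieved while the measure-theoretic estimate on the natural Cantor measure survives, is where the real work lies; the dimension bookkeeping itself is then routine.
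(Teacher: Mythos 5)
Be aware first of what the paper actually does with this statement: it is stated there as an \emph{open conjecture}. The paper proves the equality $\dim_H(\LM(G))=\delta(G)$ only under additional hypotheses --- $G$ of divergence type with $\Lambda_*(G)$ of full Patterson measure (Proposition~\ref{Myr-of-dim-delta}, proved by a shadow/mass-distribution argument for the Patterson measure) --- and merely remarks that generalising the Bishop--Jones argument \cite{bj97} ``may be possible'' as a route to the general case. Your proposal follows exactly that suggested route, which is reasonable; but as written it is a research plan rather than a proof, and the steps you defer are precisely where the difficulty sits.

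Concretely: (1) your claim that ``each guiding word has bounded length'' fails in the only case that matters. If $C(M_G)$ is compact, then $G$ is convex cocompact, hence of divergence type with $\Lambda_*(G)$ of full measure, and the result is already covered by Proposition~\ref{Myr-of-dim-delta}; in the remaining, geometrically infinite, case the set $VC$ is non-compact, the targets $(v_m,p_m)$ lie arbitrarily deep in the convex core, and a guiding word steering the geodesic within $1/m$ of $(v_m,p_m)$ has displacement at least comparable to $d(p_0,p_m)\to\infty$. The dimension bookkeeping therefore rests on a quantitative comparison between these unbounded guiding lengths and the free-block lengths, which you assert to be ``asymptotically negligible'' but never set up. (2) The schedule you describe visits the $m$-th target once, within $1/m$; that does not make the forward orbit dense in $VC$, which is what the dynamical characterisation of $\LM(G)$ requires --- you need every target approached within every accuracy $1/j$ infinitely often, i.e.\ a diagonal enumeration over pairs $(m,j)$. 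This is repairable, but as written the conclusion ``hence Myrberg'' does not follow. (3) The preservation of shadow separation and geodesic tracking across the spliced guiding segments --- which you yourself flag as ``where the real work lies'' --- is not an implementation detail; it is the entire content of a Bishop--Jones-type argument, and it is delicate even before splicing once $G$ has parabolic elements or non-uniform orbit growth. Until (1)--(3) are carried out, you have a plausible strategy that matches the paper's own speculation, not a proof of the conjecture.
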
 

\bigskip

\end{document}